\documentclass[12pt, reqno]{amsart}
\usepackage{amsfonts,amsmath,amssymb,amsthm,epsfig,cite,graphicx,hyperref,
	color, esint,fancyhdr, enumerate, latexsym,amsrefs, makecell}
\usepackage{color}
\usepackage[mathscr]{eucal}
\usepackage{comment}

\usepackage[nameinlink]{cleveref}

\numberwithin{equation}{section}

\theoremstyle{definition}
\newtheorem{definition}{Definition}[section]

\theoremstyle{remark}
\newtheorem{remark}[definition]{Remark}
\theoremstyle{plain}
\newtheorem{theorem}[definition]{Theorem}

\newtheorem{result}[definition]{Result}

\newtheorem{corollary}[definition]{Corollary}

\newtheorem{conjecture}[definition]{Conjecture}

\newcommand{\Z}{\mathbb{Z}}

\newcommand{\om}{\omega}
\newcommand{\Om}{\Omega}

\newcommand{\st}{\subset}

\newcommand{\D}{\mathbb{D}}

\newcommand{\bdy}{\partial}

\newcommand{\smoo}{\mathcal{C}}

\newcommand{\CC}{\mathbb{C}^2}
\newcommand{\cplx}{\mathbb{C}}

\newcommand{\cn}{\mathbb{C}^n}
\newcommand{\GG}{\mathbb{G}_2}

\begin{document}
\title[weak Wolff-Denjoy theorem for certain non-convex domains]{On weak Wolff--Denjoy theorem for certain non-convex domains}
\author[Vikramjeet Singh Chandel]{Vikramjeet Singh Chandel}
\address{Department of Mathematics and Statistics, Indian Institute of Technology Kanpur,
Kanpur -- 208 016, India}
\email{vschandel@iitk.ac.in}

\author[Sanjoy Chatterjee]{Sanjoy Chatterjee}
\address{Department of Mathematics and Statistics, Indian Institute of Technology Kanpur,
Kanpur -- 208 016, India}
\email{ramvivsar@gmail.com}

\author[Chandan Sur]{Chandan Sur}
\address{Department of Mathematics and Statistics, Indian Institute of Technology Kanpur,
Kanpur -- 208 016, India}
\email{chandans21@iitk.ac.in; survumath@gmail.com }

\keywords{Taut, acyclic, Wolff--Denjoy theorem, holomorphic retract, fixed point set, target set}

\subjclass[2020]{Primary: 32H50, 37F99}


	\begin{abstract}
	In this paper, we provide a class of domains in $\mathbb{C}^3$, 
    such that every holomorphic self-map of that domain either has a fixed point
    or the sequence of iterates is compactly divergent. 
    In particular, it follows that the
    symmetrized bidisc, symmetrized tridisc, tetrablock, pentablock are in the aforementioned class of domains. We also  prove that the fixed point set of a holomorphic self map of  symmetrized bidisc and tetrablock is either  empty set or a  holomorphic  retract. 
    For the symmetrized bidisc, given a holomorphic self-map such that the sequence of iterates is compactly divergent, we also provide a description of its target set.
	\end{abstract}
     \maketitle
 	\section{Introduction and statements of the results}\label{S:intro}
    Let $\mathbb{D}$ be the open unit disc in the complex plane with center at the origin.
    One of the celebrated results regarding the iteration theory
    of a holomorphic self-map of the unit disc is the Wolff--Denjoy Theorem. 
    \begin{result}\label{R:WD}
    Let $f:\mathbb{D}\to\mathbb{D}$ be a holomorphic self-map. Then the
    following dichotomy holds:
    \begin{itemize}
        \item[$(a)$] $f$ has a fixed point in $\mathbb{D}$. 
        \smallskip

        \item[$(b)$] There exists a point $p\in\bdy\mathbb{D}$ such that 
        the sequence of $n$-th iterates $\{f^n\}$ converges to $p$ uniformly on
        compact subsets of $\mathbb{D}$. 
    \end{itemize}
    \end{result}
    \noindent There has been a lot of interest in finding a generalization of the above theorem
    for a general hyperbolic complex manifold $\mathscr{X}$ equipped with the 
    Kobayashi distance $K_{\mathscr{X}}$. In this direction, Abate has done 
    extensive work for bounded domains $D$ for which the Kobayashi distance 
    $K_D$ is complete; see e.g. \cites{Abate1988Z, Abate1989, Abate1991} 
    and the references therein. In particular, Abate 
    \cite[Theoem 0.6]{Abate1988Z} generalized Result ~\ref{R:WD} for bounded strongly convex domain with $\smoo^{2}$ boundary. Later in \cite[Theorem 1]{Hung94} Huang generalized Result~\ref{R:WD} for contractible, bounded strongly pseudoconvex domain in $\cn$ with $\smoo^{3}$ boundary.
    \smallskip
    
    For Kobayashi hyperbolic domains for which the Kobayashi distance
    is not complete\,---\,which is the generic case
    in higher dimensions\,---\,a new tool, namely, the  visibility property 
    with respect to the Kobayashi distance was introduced by Bharali--Zimmer in 
    \cite{BZ2017} that plays a crucial role in establishing a version of
    Result~\ref{R:WD} for domains that possess this property. 
    \smallskip

    For a complex manifold $\mathscr{X}$ that is taut, given a holomorphic self-map 
    $f$ such that the sequence of iterates $\{f^n\}$ is not compactly divergent, 
    Abate\,---\,inspired by a result of Bedford \cite{Bedford:1983}, proved an important result about the limit points of the sequence  $\{f^n\}$;
    see Result~\ref{Res:exislimman} below. This motivates us to investigate the following three problems on a taut complex manifold:

\begin{itemize}
\item[$(a)$] Describe the asymptotic behavior of the sequence of iterates when it is not compactly divergent.
\smallskip

\item[$(b)$] Determine conditions on the map which ensure that the sequence of iterates is not compactly divergent.
\smallskip

\item[$(c)$] Describe the asymptotic behavior of the sequence of iterates when it is compactly divergent.
\end{itemize}
\noindent Regarding problem $(b)$, Abate in \cite[Theorem 0.4]{Abate1991}, proved that if $\mathscr{X}$ is a taut, acyclic manifold (i.e., $H_{j}(\mathscr{X},\mathbb{Z})=0$ for all $j\in\mathbb{N}$), then for every
$f\in \mathrm{Hol}(\mathscr{X}, \mathscr{X})$ either $f$ admits a periodic point or the sequence $\{f^{k}\}$ is compactly divergent. In the same paper, he conjectured  the following: 

\begin{conjecture}\label{Con:Abate}
   If $\mathscr{X}$ is a taut, acyclic manifold and
   $f\in \mathrm{Hol}(\mathscr{X}, \mathscr{X})$, then $\{f^{k}\}$ is compactly divergent if and only if $f$ has no fixed point. 
\end{conjecture}
\noindent
The conjecture was disproved in \cite{AbateHeinz1992}. However,
Conjecture ~\ref{Con:Abate} is true for bounded convex domains;
see \cite[Theorem~2.4.20]{Abate1989}. Another class for which the above
conjecture holds true is the class of 
taut, acyclic complex manifolds with dimension at most $2$; see Result~\ref{R:Abate} below.
In this article, we are interested 
in the classes of domains in $\mathbb{C}^3$ for which 
the above conjecture holds.
\smallskip

We shall say that a complex manifold $\mathscr{X}$ has the
{\em weak Wolff--Denjoy  property } if the conclusion in Conjecture~\ref{Con:Abate} holds true for every holomorphic self-map of the complex manifold $\mathscr{X}$. Note that the  weak Wolff-Denjoy property is invariant under biholomorphisms.
Therefore, any domain in $\cn$ which is biholomorphic to a bounded convex domain has
the weak Wolff--Denjoy property. In this work, we present
certain special domains\,---\,that are not biholomorphic to a
convex domain\,---\,that possess the weak Wolff--Denjoy property. 
These domains are symmetrized bidisc $\mathbb{G}_{2}$, 
symmetrized tridisc $\mathbb{G}_{3}$, tetrablock $\mathbb{E}$, 
pentablock $\mathcal{P}$; see Section~\ref{S:prelims} for their definition and basic properties that we need in this paper. We now present our first result. 
\begin{theorem}\label{T:WWD}
If $\Omega \in \{\mathbb{G}_{2}, \mathbb{G}_{3}, \mathbb{E}, \mathcal{P}\}$,
then $\Om$ satisfies weak Wolff--Denjoy property.    
\end{theorem}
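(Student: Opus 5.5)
Since the domains are taut, I will work from Abate's structure theorem for limits of iterates (Result~\ref{Res:exislimman}). Suppose $f\in\mathrm{Hol}(\Om,\Om)$ and $\{f^k\}$ is not compactly divergent. That result gives a holomorphic retraction $\rho=\lim f^{k_\nu}$ onto a connected closed complex submanifold $M\subset\Om$, the limit manifold. It also says that $\varphi:=f|_M$ is an automorphism of $M$. So it suffices to show that $\varphi$ has a fixed point in $M$, which is then a fixed point of $f$. The other direction is trivial: a fixed point prevents compact divergence. I will split into cases according to $m=\dim M\in\{0,1,2,3\}$; here $\Om\subset\cplx^3$, and for $\GG$ the ambient dimension is $2$, so that case is covered directly by Result~\ref{R:Abate}.

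If $m=0$, then $M$ is a point, and that point is fixed. If $m=\dim\Om$, then $\rho=\mathrm{id}$ and $f$ is an automorphism. Here I will use the explicit structure of $\mathrm{Aut}(\Om)$ for each domain. For $\mathbb{G}_3$, $\mathbb{E}$ and $\mathcal{P}$, the automorphisms are induced by M\"obius maps of $\D$ (with some extra rotations or symmetries). Their iterates are not compactly divergent, so the corresponding M\"obius map is elliptic. Each automorphism then fixes a point of the form $(\text{sym of } (a,a,a))$, $(0,0,0)$ after conjugation, etc.

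In the intermediate cases $m=1,2$, $M$ is a holomorphic retract of $\Om$ and is therefore taut. Since it is a retract of a contractible (acyclic) domain, it is also acyclic. If $m=1$, $M$ is a simply connected taut Riemann surface, hence biholomorphic to $\D$. Then $\varphi$ is an automorphism of $\D$ whose iterates are not compactly divergent (a subsequence of them converges to $\mathrm{id}_M$), so $\varphi$ is elliptic and has a fixed point. If $m=2$, $M$ is a taut acyclic complex manifold of dimension $2$, and Result~\ref{R:Abate} applies to $\varphi$. Since $\{\varphi^{k}\}$ is not compactly divergent, $\varphi$ has a fixed point.

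The main obstacle is the top-dimensional automorphism case. It needs an explicit description of $\mathrm{Aut}(\Om)$ for each of the four domains, and it needs the fact that non-compactly-divergent automorphisms have fixed points. I would try first to show, using properness of the action, that the closure of $\{f^k\}$ in $\mathrm{Aut}(\Om)$ is a compact group $K$, which holds since $\Om$ is taut. Then I would try to produce a $K$-fixed point, either from the known description of $\mathrm{Aut}(\Om)$ as $\mathrm{Aut}(\D)$ (times a finite group) acting through symmetrization, or by averaging in a convex realization when one is available. For the tetrablock and pentablock I would use the published automorphism group descriptions, which are of the same M\"obius type.
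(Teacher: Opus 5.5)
Your reduction is the paper's: pass to the limit manifold $M$, handle $\dim M\le 2$ with Abate's two-dimensional theorem applied to $f|_M$, and note that $\dim M=3$ forces $f\in\mathrm{Aut}(\Om)$. Your separate treatment of $\dim M=0,1$ is correct but already covered by that theorem. For $\mathbb{G}_3$ and $\mathcal{P}$ your sketch is also the paper's argument. Suppose the inducing M\"obius map $h$ (resp.\ $\gamma$) had no fixed point in $\D$. Then $h^n$ would tend to a boundary point and drag $f^n(\pi_3(z,z,z))$ (resp.\ $f^n(0,0,0)$) to $\bdy\Om$, which is incompatible with $f^{k_\nu}\to\mathrm{id}$. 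So it has a fixed point $a$, and $\pi_3(a,a,a)$ (resp.\ $(0,2a,a^2)$) is fixed by $f$.

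The genuine gap is the top-dimensional case for the tetrablock. $\mathrm{Aut}(\mathbb{E})$ is not ``$\mathrm{Aut}(\D)$ times a finite group acting through symmetrization''. By \eqref{E:auttetra} its elements are $L_\nu R_\chi F^\mu$, with two independent $\nu,\chi\in\mathrm{Aut}(\D)$ and a flip $F$. So ``the corresponding M\"obius map is elliptic'' is not even well posed, and no fixed point is actually produced.

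Your fallbacks do not close this. Compactness of $K=\overline{\{f^k\}}$ alone cannot give a fixed point: if it did, applying it to $\Gamma(f)$ acting on the limit manifold would prove Conjecture~\ref{Con:Abate} for every taut acyclic manifold, which is false. Also, $\mathbb{E}$ is not biholomorphic to a convex domain, so there is nothing to average in.

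The missing idea is the paper's. The triangular set $\mathcal{T}=\{(a,b,ab):a,b\in\D\}=\mathbb{E}\cap\{x_3=x_1x_2\}\cong\D^2$ is closed and invariant under every automorphism of $\mathbb{E}$. On it, $L_\nu R_\chi F^\mu$ acts as a product of two M\"obius maps, possibly followed by the swap $(a,b)\mapsto(b,a)$. Since $M=\mathbb{E}$ gives $f^{k_\nu}\to\mathrm{id}$, the iterates of $f|_{\mathcal{T}}$ are not compactly divergent in $\mathcal{T}$. The weak Wolff--Denjoy property of the convex domain $\D^2$ then yields a fixed point of $f$ in $\mathcal{T}$. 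Alternatively, check directly that both factors are elliptic, or in the swapped case that their composite is elliptic, which again produces a fixed point. With this supplement your argument is complete.
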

\noindent 
We present the proof of this theorem in Section~\ref{S:proof}. 
Our proof is based on the concrete description of the
automorphism group ${\rm Aut}(\Om)$ and Result~\ref{R:Abate} below.
The scrupulous reader shall notice that the automorphism group plays an 
important role in the proof of the above theorem. It's natural to
investigate weak Wolff--Denjoy property for domains in
$\mathbb{C}^3$ via its automorphism group. In this direction, we 
first present the following result. 
\begin{theorem}\label{T:Tichotomy}
     Let $\Omega \subset\mathbb{C}^{3}$ be a taut, acyclic domain and $f \in {\rm Hol}(\Om, \Om)$.
     Then one of the following holds:
     \begin{enumerate}
         \item [$(i)$]
         $f$ has a fixed point in $\Om$.
         \smallskip
         
         \item [$(ii)$]
         $\{f^{n}\}$ is compactly divergent.
         \smallskip
         
         \item[$(iii)$]
         $f$ is a periodic automorphism of $\Omega.$
     \end{enumerate}
     \end{theorem}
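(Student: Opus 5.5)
Suppose $\{f^n\}$ is not compactly divergent; we show that $f$ either has a fixed point or is a periodic automorphism. The first tool is Abate's limit manifold theorem (Result~\ref{Res:exislimman}, the Bedford--Abate result). Since $\Om$ is taut and $\{f^n\}$ is not compactly divergent, there is a holomorphic retraction $\rho\colon\Om\to M$ onto a closed connected complex submanifold $M\subset\Om$. It arises as a limit of a subsequence $f^{n_j-m_j}$, and $\varphi:=f|_M$ is an automorphism of $M$. Moreover, the limit points of $\{f^n\}$ are of the form $\gamma\circ\rho$ with $\gamma$ in the closed subgroup $\Gamma\subset{\rm Aut}(M)$ generated by $\varphi$, and $\Gamma$ is compact. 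Being a holomorphic retract of an acyclic taut manifold, $M$ is itself taut and acyclic, since retracts inherit vanishing homology. Also, any fixed point of $\varphi$ in $M$ is a fixed point of $f$. We therefore split into cases according to $\dim M\in\{0,1,2,3\}$.

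If $\dim M=0$, then $M$ is a point, which is fixed by $f$, so $(i)$ holds. If $\dim M\in\{1,2\}$, we apply Result~\ref{R:Abate}, which says that taut acyclic manifolds of dimension at most $2$ have the weak Wolff--Denjoy property. The iterates $\varphi^n=f^n|_M$ are not compactly divergent in $M$: otherwise the limits $\gamma\circ\rho$ restricted to $M$ would not be automorphisms. Hence $\varphi$ has a fixed point in $M$, which gives $(i)$.

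If $\dim M=3$, then $M=\Om$ and $\rho={\rm id}$, so $f\in{\rm Aut}(\Om)$ and $\overline{\{f^n\}}$ is a compact abelian subgroup $\Gamma$ of ${\rm Aut}(\Om)$. If $\Gamma$ is finite, then $f^k={\rm id}$ for some $k$, and $f$ is a periodic automorphism, giving $(iii)$. If $\Gamma$ is infinite, it is a compact abelian Lie group of positive dimension, since ${\rm Aut}(\Om)$ is a Lie group for a taut (hence hyperbolic) $\Om$. We must then produce a fixed point. The plan is to take a point $z\in\Om$ and average over $\Gamma$. A cleaner route is to use the identity component $T\cong$ torus: pick a circle subgroup $S^1\subset\Gamma$ or more generally work with the whole compact group $\Gamma$. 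Since $\Om$ is acyclic, a Smith-theory/fixed point argument gives a fixed point: for a compact group acting on an acyclic space, finite-dimensional Lefschetz/Smith arguments show that each cyclic subgroup of prime-power order has a nonempty acyclic fixed set over $\mathbb{Z}_p$, and tori acting on $\mathbb{Z}$-acyclic finite-dimensional manifolds have fixed points. The fixed set ${\rm Fix}(\Gamma)$ is then a nonempty closed complex submanifold, and $f\in\Gamma$ fixes it pointwise.

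The main obstacle is this last step. We must guarantee a fixed point of the infinite compact group $\Gamma$ on a merely acyclic (not contractible) domain. As a fallback, I would use the alternative known technique: the set ${\rm Fix}(T_0)$ for a circle $T_0\subset\Gamma$ is a nonempty acyclic complex submanifold of dimension at most $2$ that is invariant under $f$, because $\Gamma$ is abelian. It is also taut, being a retract via averaging over $T_0$. Then $f$ restricted to it falls under the lower-dimensional case above, which yields a fixed point.
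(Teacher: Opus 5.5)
Your reduction to $f\in{\rm Aut}(\Om)$ with $\Gamma=\overline{\{f^n\}}$ compact matches the paper, as does your handling of $\dim M\le 2$ and of finite $\Gamma$. For infinite $\Gamma$, however, the first argument you give has a genuine gap. It needs a point of ${\rm Fix}(\Gamma)$. Smith theory produces fixed points only for $p$-groups, and $\Gamma\cong\mathbb{Z}_q\times\mathbb{T}^r$ may have a cyclic factor of composite order. Knowing that prime-power cyclic subgroups and tori each have fixed points does not give a common fixed point. No purely topological argument can supply one: cyclic groups such as $\mathbb{Z}_{pq}$ act smoothly without fixed points on Euclidean spaces (Conner--Floyd, Kister). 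Crossing such an action with a rotation of $\mathbb{C}$ gives a fixed-point-free action of the monothetic group $\mathbb{Z}_{pq}\times\mathbb{T}$ on a contractible manifold. This is the same obstruction behind the paper's concluding question about periodic automorphisms.

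Your fallback avoids this and gives a complete proof after small repairs.
\begin{itemize}
\item Tautness: $\Om^{T_0}$ is taut simply because it is a closed complex submanifold of the taut manifold $\Om$. Averaging over $T_0$ is not available on a general domain, which has no linear or convex structure.
\item Non-divergence: to apply Result~\ref{R:Abate} to $g=f|_{\Om^{T_0}}$ you must check that $\{g^n\}$ is not compactly divergent in $\Om^{T_0}$. For $x\in\Om^{T_0}$ the orbit $\{f^n(x)\}$ lies in the compact set $\Gamma\cdot x$. That set is contained in $\Om^{T_0}$ because $\Gamma$ is abelian.
\item Dimension: $\dim\Om^{T_0}\le 2$ because $\Om^{T_0}$ is connected (it is acyclic by Result~\ref{R:Topo1}). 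A $3$-dimensional closed submanifold would be all of $\Om$, making $T_0$ trivial. In dimension $0$ it is a single $f$-invariant point, hence fixed by $f$.
\end{itemize}

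So repaired, your argument takes a different route from the paper's. The paper uses Results~\ref{R:limitgroup} and~\ref{R:limitgroup1} to write $\Gamma(f)\cong\mathbb{Z}_p\times\mathbb{T}^r$ with $r\le 2$. A case analysis of the rotation angles via Kronecker--Weyl density then yields an iterate $f^l$ whose powers are dense in a torus $T$, so that ${\rm Fix}(f^l)=\Om^T$. This set is nonempty, acyclic, taut and $f$-invariant, and Result~\ref{R:Abate} excludes $\dim\le 2$. Hence ${\rm Fix}(f^l)=\Om$ and $f$ is periodic.

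You instead take an arbitrary circle $T_0\subset\Gamma$ and get $f$-invariance of $\Om^{T_0}$ directly from commutativity. You never need to identify $\Om^{T_0}$ with the fixed set of an iterate. There is no angle casework and no bound on $r$, and the case $r=1$, which the paper leaves as ``similar'', is covered uniformly. Your route also shows explicitly that infinite $\Gamma$ always gives alternative $(i)$, so $(iii)$ is needed only for automorphisms of finite order, which is exactly where the paper's open question lies. Both proofs rest on the same inputs: the torus fixed-point results (Results~\ref{R:topo2} and~\ref{R:Topo1}) and Abate's theorem in dimension at most two.
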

     \noindent The above theorem is a refinement of a general
     result by Abate \cite[Corollary~2.10]{Abate1991}
     that says that given a taut and acyclic domain 
     $\Om\subset\mathbb{C}^n$, and 
     $f \in {\rm Hol}(\Om, \Om)$ either $\{f^n\}$ is compactly divergent or
     $f$ has a periodic point. 
     \smallskip
     
Now we describe the class of domains having the {weak Wolff--Denjoy property}
by means of their automorphism group.
Recall that for a domain $\Om \subset \cn$ the automorphism
group ${\rm Aut}(\Om)$ forms a topological group under 
usual composition of mapping, and with respect 
to the compact-open topology. Cartan proved that for 
every bounded domain $\Om \subset \cn$,
the group ${\rm Aut}(\Om)$ is a real Lie group.
In what follows, a {\em torus subgroup} of ${\rm Aut}(\Om)$ is
a subgroup that is isomorphic to $\mathbb{T}^r$, for some non-negative integer $r$. 
Here $\mathbb{T}$ is the circle group. We now present our next result. 
\begin{theorem}\label{T:WWD1}
Let $\Omega\subset\mathbb{C}^3$ be a taut, acyclic domain such that every finite cyclic subgroup of $\text{\rm Aut}(\Omega)$ is contained in a torus subgroup of ${\rm Aut}(\Omega)$, then $\Omega$ satisfies weak Wolff--Denjoy property. 
\end{theorem}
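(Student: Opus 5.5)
Given Theorem~\ref{T:Tichotomy}, it suffices to handle one case. Let $f\in{\rm Hol}(\Om,\Om)$. If $f$ has a fixed point, then $\{f^n\}$ is not compactly divergent, because the orbit of the fixed point stays in a compact set. Conversely, assume $f$ has no fixed point. We must show that $\{f^n\}$ is compactly divergent. By Theorem~\ref{T:Tichotomy} the only alternative to exclude is $(iii)$, namely that $f$ is a periodic automorphism of $\Om$ without a fixed point. So the whole proof reduces to showing: \emph{every periodic automorphism of $\Om$ has a fixed point in $\Om$}.

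Suppose $f^m={\rm id}$ for some $m\geq 1$. Then $\langle f\rangle$ is a finite cyclic subgroup of ${\rm Aut}(\Om)$. By hypothesis it is contained in a subgroup $T\subset{\rm Aut}(\Om)$ isomorphic to $\mathbb{T}^r$. The plan is to show that $T$ has a common fixed point in $\Om$; that point is then fixed by $f$. Since $\Om$ is taut, ${\rm Aut}(\Om)$ acts properly on $\Om$, so $T$ is a compact group of biholomorphisms acting continuously. I would argue by induction on $r$, or directly as follows. Choose a topological generator $g$ of $T$, that is, an element whose powers are dense; such an element exists for a torus. A point fixed by $g$ is then fixed by all of $T$, hence by $f$.

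It remains to find a fixed point of $g$. The sequence $\{g^n\}$ is not compactly divergent, since its iterates lie in the compact group $T$. Hence $g$ cannot fall under alternative $(ii)$ of Theorem~\ref{T:Tichotomy}. Alternatively, and more robustly, I would use a standard fixed-point argument for the compact torus action. Let $\mu$ be the Haar measure on $T$ and let $z_0\in\Om$. Compactness of the orbit $T\cdot z_0$ lets us average, but averaging need not stay in $\Om$, since $\Om$ need not be convex. So instead I would use a cohomological argument. $\Om$ is acyclic, hence $\mathbb{Z}_p$-acyclic for every prime $p$. By Smith theory, a $\mathbb{Z}_p$ action on a finite-dimensional $\mathbb{Z}_p$-acyclic space has a nonempty, $\mathbb{Z}_p$-acyclic fixed-point set. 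For a torus $T$, the fixed-point set is the intersection of the fixed-point sets of the finite $p$-subgroups $\mathbb{Z}_{p^k}^r\subset T$; these are dense in $T$. Applying Smith theory iteratively to the elementary abelian subgroups gives nonempty fixed-point sets, each $\mathbb{Z}_p$-acyclic. These fixed-point sets are complex submanifolds, because fixed-point sets of compact groups of biholomorphisms are complex submanifolds by Cartan linearization. They form a decreasing family of closed analytic sets, which stabilizes by the Noetherian property on compacts. The resulting set is nonempty and fixed by a dense subgroup, hence by $T$.

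The main obstacle is this last step: passing from fixed points of finite subgroups to a fixed point of the whole torus, and so of $f$. Smith theory alone handles only $p$-groups, whereas $f$ may have order $m$ not a prime power. The torus hypothesis is exactly what makes the reduction work. Since $T$ is connected abelian, $f$ lies in a circle subgroup or in a dense $p$-torsion chain, so only prime-power subgroups are needed, and the descending chain of fixed sets stabilizes. I would first attempt to verify the stabilization together with non-emptiness. For non-emptiness, I expect to use that fixed sets of the increasing subgroups $\mathbb{Z}_{p^k}^r$ are nonempty and acyclic, and that their intersection is nonempty by tautness. Tautness gives a compactness argument: limits of points in the fixed sets cannot escape to the boundary, because the $T$-orbits are uniformly compact under the proper action.
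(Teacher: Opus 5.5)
Your reduction matches the paper's: by Theorem~\ref{T:Tichotomy} it suffices to show that a periodic automorphism $f$ has a fixed point, and one places $\langle f\rangle$ in a torus $T\leq{\rm Aut}(\Om)$ and looks for a point of $\Om^{T}$. The paper then finishes in one line by applying Result~\ref{R:topo2} to $T$. Acyclicity gives $H^{j}(\Om;\mathbb{Q})=0$ for odd $j$, so $\Om^{T}\neq\emptyset$.

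The Smith-theory argument you commit to instead has a genuine gap, exactly at the step you flag. Local Noetherianity only says that a decreasing sequence of analytic sets is stationary on each compact subset of $\Om$. It says nothing about whether $\bigcap_k \Om^{T[p^k]}$ (with $T[p^k]\cong\mathbb{Z}_{p^k}^{r}$) is nonempty, because a priori these sets could retreat toward $\bdy\Om$. Your tautness/proper-action remark does not prevent this. Compactness of $T$-orbits gives no control over where the points of $\Om^{T[p^k]}$ lie, so nothing stops a sequence $x_k\in\Om^{T[p^k]}$ from leaving every compact set.

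The step can be repaired with facts you already listed. Each $\Om^{T[p^k]}$ is $\mathbb{Z}_p$-acyclic, hence connected, and is a closed complex submanifold. So it has a single dimension $d_k\in\{0,1,2,3\}$, and $d_k$ is non-increasing in $k$. Whenever $d_{k+1}=d_k$, the set $\Om^{T[p^{k+1}]}$ is open (equal dimensions) and closed in the connected set $\Om^{T[p^{k}]}$, hence equal to it. So the chain is eventually constant and its intersection is nonempty. A point in it is fixed by the dense subgroup $\bigcup_k T[p^k]$, hence by $T$, and in particular by $f$ (which need not itself lie in any $T[p^k]$).

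The route you set aside as less robust is in fact a complete proof once you add the missing sentence. If $r=0$ then $f={\rm I}$. If $r\geq 1$, take a topological generator $g$ of $T$:
\begin{itemize}
\item $g$ has infinite order, so alternative $(iii)$ of Theorem~\ref{T:Tichotomy} fails for $g$;
\item $\{g^n\}$ lies in the compact group $T$, so some subsequence converges in ${\rm Aut}(\Om)$, and alternative $(ii)$ fails.
\end{itemize}
Hence $g$ has a fixed point, and by continuity that point is fixed by $\overline{\langle g\rangle}=T\ni f$. This uses Theorem~\ref{T:Tichotomy} as a black box, whose proof in the irrational-rotation cases itself invokes Result~\ref{R:topo2}. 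The paper simply applies that torus fixed-point theorem to $T$ directly.
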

\noindent We deduce the following  corollary from Theorem~\ref{T:WWD1}
 \begin{corollary}{\label{col3.9}}
Let $\Omega\subset\mathbb{C}^3$ be a taut, acyclic domain. 
Suppose ${\rm Aut}(\Omega)\cong{\rm Aut}(\mathbb{D})$  then $\Omega$ has
weak Wolff--Denjoy property.
\end{corollary}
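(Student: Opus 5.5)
We will deduce the corollary from Theorem~\ref{T:WWD1}. It therefore suffices to verify its hypothesis: every finite cyclic subgroup of ${\rm Aut}(\Om)$ is contained in a torus subgroup of ${\rm Aut}(\Om)$. Since the hypotheses on $\Om$ (taut, acyclic, $\Om\subset\mathbb{C}^3$) are assumed, the argument reduces to a statement purely about the topological group ${\rm Aut}(\mathbb{D})$. We interpret $\cong$ as an isomorphism of topological groups; the images of torus subgroups are then torus subgroups, and finite cyclic subgroups correspond to finite cyclic subgroups.

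Let $\phi\in{\rm Aut}(\mathbb{D})$ generate a finite cyclic subgroup $\langle\phi\rangle$ of order $m$. If $m=1$, the trivial group is $\mathbb{T}^0$. Otherwise $\phi$ is a non-identity automorphism of finite order, hence not parabolic or hyperbolic, because those have iterates converging to a boundary point and so have infinite order. Thus $\phi$ is elliptic with a unique fixed point $a\in\mathbb{D}$.

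Let $\varphi_a(z)=(a-z)/(1-\bar a z)$. Then $\psi:=\varphi_a\circ\phi\circ\varphi_a$ fixes $0$, so by Schwarz's lemma $\psi(z)=e^{i\theta}z$, and $\psi^m={\rm id}$ forces $e^{i\theta}$ to be an $m$-th root of unity. Hence $\langle\phi\rangle$ lies in the subgroup
\[
K_a=\{\varphi_a\circ R_t\circ\varphi_a : t\in\mathbb{R}\},\qquad R_t(z)=e^{it}z .
\]
This subgroup is the stabilizer of $a$. It is conjugate to the rotation group, which is isomorphic to $\mathbb{T}$ as a topological group, because $t\mapsto R_t$ is a continuous injective homomorphism $\mathbb{T}\to{\rm Aut}(\mathbb{D})$ from a compact group and hence a homeomorphism onto its image. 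So $K_a$ is a torus subgroup. Transporting through the isomorphism ${\rm Aut}(\Om)\cong{\rm Aut}(\mathbb{D})$, every finite cyclic subgroup of ${\rm Aut}(\Om)$ lies in a torus subgroup, and Theorem~\ref{T:WWD1} gives the weak Wolff--Denjoy property.

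The only real point is excluding parabolic and hyperbolic finite-order elements. The cleanest way is to note that any finite subgroup of ${\rm Aut}(\mathbb{D})$ is compact and hence has a fixed point, namely the Kobayashi-barycentre or centre of mass of an orbit, which is the Cartan fixed point theorem. Alternatively one can use the trace classification in $PSU(1,1)$: finite order forces $|\mathrm{tr}|<2$ or identity. The remaining step, that the isomorphism carries tori to tori, is immediate when the isomorphism is topological. If only an abstract group isomorphism were intended, one would appeal to the automatic continuity of isomorphisms between these Lie groups, but I would simply read the hypothesis as a Lie group isomorphism.
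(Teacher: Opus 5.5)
Your proposal is correct and follows the same route as the paper. Both arguments transport a finite cyclic subgroup through the isomorphism ${\rm Aut}(\Om)\cong{\rm Aut}(\D)$, note that it lies in a circle subgroup of ${\rm Aut}(\D)$ (the stabilizer of the fixed point, which is conjugate to the rotations), and then invoke Theorem~\ref{T:WWD1}. You add details the paper leaves implicit: why finite-order automorphisms of $\D$ are elliptic, and why the isomorphism should be read as topological so that torus subgroups go to torus subgroups. (The paper's proof cites Theorem~\ref{T:WWD}, but Theorem~\ref{T:WWD1} is what is meant.)
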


We now turn our attention to study the structure of the fixed point set of a
holomorphic self-map of the symmetrized bidisc. Vigu\'{e} \cite{Vigue:1985} 
proved that if $f$ is a 
holomorphic self-map of a convex domain
$\Om \subset \cn$ then the ${\rm Fix} (f):=\{z \in \Om: f(z)=z\}$
is a holomorphic retract of the domain $\Om$, see Section~\ref{S:prelims}
for the definition of a retract.
However, in general, the fixed point set of a holomorphic self-map
may not be connected. 
For example if  $A({1}/{2},\,2):=\{z \in \cplx\,:\,{1}/{2}<|z|<2\}$ and
$f(z)={1}/{z}$. Then $\text{Fix}(f)=\{1,-1\}$. 
For the symmetrized bidisc $\mathbb{G}_{2}$, and the tetrablock $\mathbb{E}$ we have the following result.
\begin{theorem}\label{P:retractofG2}
    Let $f \in {\rm Hol}(\mathbb{G}_{2}, \mathbb{G}_{2})$ be such that ${\rm Fix}(f) \neq \emptyset$.
    Then ${\rm Fix}(f)$ is a holomorphic retract in  $\GG$. In particular, it is connected. 
\end{theorem}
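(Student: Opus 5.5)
We will show that ${\rm Fix}(f)$ is a holomorphic retract of $\GG$ by producing a holomorphic self-map $\rho$ of $\GG$ with $\rho\circ\rho=\rho$ and $\rho(\GG)={\rm Fix}(f)$. The plan is to follow Vigu\'{e}'s scheme for convex domains, replacing convexity by the structure of ${\rm Aut}(\GG)$ and by the known description of retracts of $\GG$.

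\emph{Step 1: normalisation.} Since $\GG$ is taut and $f$ has a fixed point, the sequence $\{f^n\}$ is not compactly divergent. By Result~\ref{Res:exislimman} (Abate--Bedford) there is a holomorphic retraction $\rho$ of $\GG$ which is a limit point of $\{f^{n}\}$. Its image $M=\rho(\GG)$ is a closed connected complex submanifold of $\GG$, and $f|_M$ is an automorphism of $M$. Every fixed point of $f$ lies in $M$, because $\rho(p)=\lim f^{n_k}(p)=p$ whenever $f(p)=p$. Hence ${\rm Fix}(f)={\rm Fix}(f|_M)$, and the problem reduces to studying the fixed points of an automorphism $g=f|_M$ of the retract $M$.

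\emph{Step 2: case analysis on $\dim M$.} Holomorphic retracts of $\GG$ are well understood: they are a point, all of $\GG$, or a one-dimensional retract biholomorphic to $\D$ (for instance the ``royal'' disc $\{(2\lambda,\lambda^2)\}$ or a flat geodesic).
\begin{itemize}
\item[$(i)$] If $\dim M=0$, then ${\rm Fix}(f)$ is a single point, which is trivially a retract.
\item[$(ii)$] If $\dim M=1$, then $M\cong\D$ and $g\in{\rm Aut}(\D)$ has a fixed point. So $g$ is either the identity, in which case ${\rm Fix}(f)=M$, a retract, or an elliptic automorphism different from the identity, which has exactly one fixed point.
\item[$(iii)$] If $M=\GG$, then $f\in{\rm Aut}(\GG)$ has a fixed point.
\end{itemize}

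\emph{Step 3: the automorphism case.} This case requires the explicit group ${\rm Aut}(\GG)=\{\pi\circ(m\times m),\ \pi\circ((m\times m)\circ\sigma)\}$, where $m\in{\rm Aut}(\D)$, $\pi(z_1,z_2)=(z_1+z_2,z_1z_2)$ and $\sigma$ is the coordinate swap. Since the two descriptions give the same map on $\GG$, every automorphism is induced by $m\times m$ and is determined by $m$. A fixed point $\pi(z_1,z_2)$ of $f$ means $\{m(z_1),m(z_2)\}=\{z_1,z_2\}$. Two sub-cases arise.
\begin{itemize}
\item[$(a)$] Both $z_1$ and $z_2$ are fixed by $m$. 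Then $m$ is elliptic, and after conjugating by an automorphism we may take $m(z)=e^{i\theta}z$, so that $f(s,p)=(e^{i\theta}s,e^{2i\theta}p)$. Its fixed point set is one of the following: all of $\GG$ if $e^{i\theta}=1$; the disc $\{s=0\}$ if $e^{i\theta}=-1$; otherwise the origin.
\item[$(b)$] $m$ swaps $z_1$ and $z_2$ with $z_1\neq z_2$. Then $m^2$ fixes both points, so $m$ is an involution. Conjugating, $m(z)=-z$, which gives the fixed set $\{s=0\}$ again.
\end{itemize}
The disc $\{(0,p):|p|<1\}$ is a retract of $\GG$ via $(s,p)\mapsto(0,p)$, since $(0,p)\in\GG$ whenever $|p|<1$. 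All cases therefore give retracts. Conjugating by automorphisms preserves being a retract, and connectedness follows because retracts are connected.

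\emph{Main obstacle.} The delicate point is Step 1 combined with the one-dimensional case. One must verify that $M$ is genuinely a retract of $\GG$, so that a fixed-point set which is a retract of $M$ is a retract of $\GG$ by composing retractions. One must also handle the mixed sub-case in Step 3, where $m$ fixes one of $z_1,z_2$ and swaps the other; this is impossible because it would give $z_1=z_2$, and that point is already covered by $(a)$. I would check the classification of fixed points of $m\times m$ modulo $\sigma$ carefully to avoid missing such sets.
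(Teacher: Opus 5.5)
Your proposal is correct and follows essentially the same route as the paper. You pass to the limit retraction $\rho$ so that ${\rm Fix}(f)\subset M$, and dispose of $\dim M\in\{0,1\}$ via the disc structure of $M$ and the fixed points of disc automorphisms. In the case $M=\GG$ you use the Edigarian--Zwonek description of ${\rm Aut}(\GG)$ to reduce to a point, all of $\GG$, or an automorphic image of $\{s=0\}$, which is retracted by $(s,p)\mapsto(0,p)$. The only difference is cosmetic: the paper splits the automorphism case by the order of $h$ ($h={\rm I}_{\D}$, $h^2\neq{\rm I}_{\D}$, $h^2={\rm I}_{\D}\neq h$), whereas you split by the type of a fixed point and read everything off the rotation normal form $(s,p)\mapsto(e^{i\theta}s,e^{2i\theta}p)$.
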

\noindent 

\begin{theorem}\label{T:fix of tetrablock}
      Let $f \in \rm {Hol}(\mathbb{E}, \mathbb{E})$ be such that ${\rm Fix}(f) \neq \phi$. Then ${\rm Fix}(f)$  a holomorphic retract. In particular, it is connected.
\end{theorem}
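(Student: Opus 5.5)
The plan is to follow the strategy for $\GG$ in Theorem~\ref{P:retractofG2}: use the taut/acyclic iteration theory (Abate's limit-retraction theorem, Result~\ref{Res:exislimman}) together with the explicit structure of ${\rm Aut}(\mathbb{E})$ and the holomorphic retracts of $\mathbb{E}$. Since ${\rm Fix}(f)\neq\emptyset$, the sequence $\{f^n\}$ is not compactly divergent. Because $\mathbb{E}$ is taut, Result~\ref{Res:exislimman} gives a limit point $\rho$ of $\{f^{n_k}\}$ that is a holomorphic retraction of $\mathbb{E}$ onto a closed connected submanifold $M$ (the limit manifold). Moreover $f|_M$ is an automorphism of $M$, and ${\rm Fix}(f)\subset M$, since any fixed point $z$ satisfies $\rho(z)=\lim f^{n_k}(z)=z$. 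So ${\rm Fix}(f)={\rm Fix}(f|_M)$, and the problem becomes one about a single automorphism $\varphi:=f|_M$ of the retract $M$.

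We then split according to $\dim M\in\{0,1,2,3\}$.

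\emph{Case $\dim M=0$.} Then $M$ is a point, which is the unique fixed point and is trivially a retract.

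\emph{Case $\dim M=3$.} Then $M=\mathbb{E}$ and $f\in{\rm Aut}(\mathbb{E})$ with a fixed point. Conjugating by an automorphism, we may try to move the fixed point to the origin, or at least to a point on a distinguished orbit. We would then use the concrete description of ${\rm Aut}(\mathbb{E})$ (Möbius-type maps together with the coordinate flip) to compute ${\rm Fix}(f)$ explicitly. These sets are linear or analytic pieces, for example a slice $\{z_1=z_2\}\cap\mathbb{E}$-type disc or the whole domain, and each can be checked to be a retract.

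\emph{Case $\dim M=1$.} A one-dimensional retract of a taut domain is biholomorphic to $\D$, because retracts of taut domains are taut and simply connected (they are acyclic, as retracts of an acyclic domain). Hence $\varphi$ is an elliptic automorphism of $\D$, so ${\rm Fix}(\varphi)$ is either a point or all of $M$, and in either case it is a retract of $\mathbb{E}$.

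\emph{Case $\dim M=2$.} This is the main obstacle. We would first try to classify two-dimensional retracts of $\mathbb{E}$ up to automorphism, expecting them to be biholomorphic to $\D^2$ or to $\GG$. For instance, $\{z_1=z_2\}$-type slices give $\GG$ via $(z_1,z_2,z_3)\mapsto(2z_1,z_3)$, and other retracts arise through the bidisc embedded in $\mathbb{E}$. If $M\cong\GG$, Theorem~\ref{P:retractofG2} applied to $\varphi$ gives a retraction $r:M\to{\rm Fix}(\varphi)$, and then $r\circ\rho$ retracts $\mathbb{E}$ onto ${\rm Fix}(f)$. If $M\cong\D^2$, which is convex, Vigu\'e's theorem applies in the same way.

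If classifying the 2-dimensional retracts proves hard, the fallback is to avoid it. We would use that $M$ is a taut acyclic complex manifold of dimension $2$ and argue directly. The automorphism $\varphi$ of $M$ has a fixed point, so the closure of $\{\varphi^n\}$ in ${\rm Aut}(M)$ is a compact abelian group $G$. Averaging in the spirit of Cartan's linearization, at a fixed point $p$ one obtains that ${\rm Fix}(\varphi)$ is the fixed set of $G$. This set is a connected complex submanifold, given in linearizing coordinates near $p$ by the eigenspace for eigenvalue $1$. Its connectedness and the retract property would follow by composing averaged limits of iterates, taking limits of $\frac1N\sum\varphi^{n}$ where convexity is available, or else limits of $\varphi^{n_k}$ restricted suitably. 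Composing the resulting retraction with $\rho$ then finishes the proof.
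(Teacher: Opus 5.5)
Your overall architecture matches the paper's: you pass to the limit manifold $M$, observe ${\rm Fix}(f)={\rm Fix}(f|_M)$, split on $\dim M$, and handle $\dim M\le 1$ exactly as the paper does. However, the case $\dim M=3$ has a genuine gap. ${\rm Aut}(\mathbb{E})$ is not transitive on $\mathbb{E}$. The triangular set $\mathcal{T}=\{z_3=z_1z_2\}\cap\mathbb{E}$ is ${\rm Aut}(\mathbb{E})$-invariant and is precisely the orbit of the origin. So ``move the fixed point to the origin'' is unavailable when the fixed point lies off $\mathcal{T}$. Moving it instead ``to a point on a distinguished orbit'' gives you no normal form to compute with, since \eqref{E:auttetra} becomes explicit (and linear) only on the isotropy group of $0$.

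The missing step is to show ${\rm Fix}(f)\cap\mathcal{T}\neq\emptyset$. The paper argues as follows. The restriction $f|_{\mathcal{T}}$ is an automorphism of $\mathcal{T}\cong\D^2$, which is convex and so has the weak Wolff--Denjoy property. If $f|_{\mathcal{T}}$ had no fixed point, its orbits would leave every compact subset of $\mathcal{T}$, hence of $\mathbb{E}$, because $\mathcal{T}$ is closed in $\mathbb{E}$. Then \cite[Theorem~1.1]{Abate1991} forces $\{f^n\}$ to be compactly divergent on $\mathbb{E}$, contradicting ${\rm Fix}(f)\neq\emptyset$. An alternative, Lie-theoretic repair: the isotropy group of a fixed point of $f$ is compact, and by \eqref{E:auttetra} together with conjugacy of maximal compact subgroups it is conjugate into the isotropy group of $0$.

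Only after this does transitivity on $\mathcal{T}$ let you assume $f(0)=0$. Then $f$ is $(z_1,z_2,z_3)\mapsto(az_1,bz_2,abz_3)$ with $|a|=|b|=1$, possibly composed with the flip $F$. Its fixed set is one of $\{0\}$, the discs $\{(z_1,0,0)\}$, $\{(0,z_2,0)\}$, $\{(0,0,z_3)\}$, a slice $\{z_1=cz_2\}\cap\mathbb{E}$, or $\mathbb{E}$ itself, and each has an explicit linear retraction. Note that such a slice is two-dimensional and biholomorphic to $\GG$, not a disc as you wrote.

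For $\dim M=2$, your primary route is the paper's, but the dichotomy ``$M\cong\D^2$ or $M\cong\GG$'' is a genuine theorem. The paper imports it from Ghosh--Zwonek's classification of two-dimensional retracts of $\mathbb{E}$. Exhibiting the slices $\{z_1=z_2\}$ and $\mathcal{T}$ does not show these are the only possibilities.

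Your fallback cannot replace that theorem. Vigu\'e-type averaging $\frac1N\sum\varphi^n$ needs convexity of $M$, which already fails for $M\cong\GG$, since $\GG$ is not biholomorphic to a convex domain. The linearization at a fixed point describes ${\rm Fix}(\varphi)$ only locally, so it gives no global retraction. It also does not give connectedness: for $G\cong\Z_q\times\mathbb{T}^r$ with $q>1$, Result~\ref{R:Topo1} does not apply to the finite part. As written, the two-dimensional case rests on an unproved classification, and the three-dimensional case lacks the ${\rm Fix}(f)\cap\mathcal{T}\neq\emptyset$ argument.
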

Let $\Om\subset\cn$ be a bounded domain possessing weak Wolff--Denjoy property. 
Given $f\in{\rm Hol}(\Om, \Om)$ that does not have a fixed point, there is interest 
in locating the {\em target set} $T(f)$ of the map $f$ defined by 
\[
T(f):=\bigcup_{z \in \Om}\{\xi \in \partial \Om:\exists (n_{k})_{k \in 
\mathbb{N}}\,:\,f^{n_{k}}(z) \to \xi, \ \text{as} \ k \to \infty\}.
\]
Herv\'{e} \cite{Herve:1954} studied the set $T(f)$ for a fixed point free 
holomorphic self-map of the bidisc $\D^2$. Later,
Abate--Raissy \cite{Abate_Raissy:2014} studied 
the target set for a general convex domain and, in particular, 
for the polydisc $\D^n$. Recently, Bracci--\"{O}kten \cite{Bracci_Okten:2025}
stated a conjecture regarding the target set of a fixed point
free holomorphic self-map in the polydisc. 
In this paper, we also present a result in the same vein for $\GG$.
\begin{theorem}\label{T:limit in G2}
Let $f \in {\rm Hol}(\mathbb{G}_{2}, \mathbb{G}_{2})$ be a 
fixed point free map. Given $z_0\in \GG$, let 
\[
T(f, z_{0}):=\{\xi \in \partial \Om:\exists (n_{k})_{k \in 
\mathbb{N}}\,:\,f^{n_{k}}(z_0) \to \xi, \ \text{as} \ k \to \infty\}.
\]
Then we have the following:
\begin{enumerate}
 \item [$(i)$]
     If $T(f, z_{0})\subset \{(2e^{i\theta},e^{2i\theta}):\theta \in [0,2\pi)\} 
     \subset\partial\mathbb{G}_{2}$  for some $z_0\in\GG$ then
      $T(f) \subset \{(2e^{i\theta},e^{2i\theta}):\theta \in [0,2\pi)\}$.
      \smallskip
      
        \item [$(ii)$]
        If $T(f,z_{0}) \subset \pi_2(\mathbb{D} \times e^{i\theta_0})$ 
        for some $\theta_0$ then $T(f) \subset \pi_2(\overline{\D} \times e^{i\theta_{0}}) \bigcup \pi_2(e^{i\theta_{0}} \times \overline{\D})$.
    \end{enumerate}
\end{theorem}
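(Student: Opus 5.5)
The plan is to use the Kobayashi distance $K_{\GG}$, which is complete since $\GG$ is taut. Since $\GG$ is taut and acyclic, $f$ is fixed point free, and $\GG$ has the weak Wolff--Denjoy property (Theorem~\ref{T:WWD}), the sequence $\{f^n\}$ is compactly divergent. Hence $T(f,z)\subset\partial\GG$ is nonempty for every $z\in\GG$.

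The basic comparison is this: for any $z,w\in\GG$ and all $n$, the non-expansiveness of holomorphic maps gives
\[
K_{\GG}(f^n(z),f^n(w))\le K_{\GG}(z,w)=:R.
\]
So if $f^{n_k}(z_0)\to\xi$ and (after passing to a subsequence) $f^{n_k}(w)\to\eta$, then $\xi$ and $\eta$ are limits of sequences staying at bounded Kobayashi distance. Everything then reduces to a boundary-localization statement. Given sequences $p_k\to\xi$ and $q_k\to\eta$ in $\partial\GG$ with $K_{\GG}(p_k,q_k)\le R$, we want to show:
\begin{enumerate}
\item[(i)] if $\xi$ lies in the distinguished boundary $\{(2e^{i\theta},e^{2i\theta})\}$ (the image of the torus diagonal under $\pi_2$), then so does $\eta$;
\item[(ii)] if $\xi=\pi_2(a,e^{i\theta_0})$ with $|a|<1$, then $\eta\in\pi_2(\overline{\D}\times e^{i\theta_0})$.
\end{enumerate}
Note that $\pi_2(\overline{\D}\times e^{i\theta_0})\cup\pi_2(e^{i\theta_0}\times\overline{\D})$ is just $\pi_2(\overline{\D}\times\{e^{i\theta_0}\})$ by symmetry of $\pi_2$. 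Part (ii) of the theorem is therefore exactly this closed "face".

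To prove the localization, I would use the known formula for the Carath\'eodory/Kobayashi distance on $\GG$ (Agler--Young; the two distances coincide since $\GG$ is Lempert-type):
\[
K_{\GG}(z,w)=\max_{\omega\in\overline{\D}}\,p_{\D}\big(\Phi_\omega(z),\Phi_\omega(w)\big),\qquad
\Phi_\omega(s,p)=\frac{2\omega p-s}{2-\omega s}.
\]
Bounded $K_{\GG}$ gives bounded Poincar\'e distance $p_{\D}(\Phi_\omega(p_k),\Phi_\omega(q_k))$, uniformly in $\omega$. A bounded Poincar\'e distance in $\D$ forces the following:
\begin{itemize}
\item[(a)] if $\Phi_\omega(p_k)\to\lambda\in\partial\D$, then $\Phi_\omega(q_k)\to\lambda$ as well;
\item[(b)] since $p_{\D}(x,y)\ge\big|p_{\D}(0,x)-p_{\D}(0,y)\big|$, the quantities $1-|\Phi_\omega(p_k)|$ and $1-|\Phi_\omega(q_k)|$ are comparable.
\end{itemize}

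For (ii), I would use the second coordinate. The product $p=\lambda_1\lambda_2$ satisfies $p_{\D}(p(z),p(w))\le K_{\GG}(z,w)$ because $p:\GG\to\D$ is holomorphic. Since $p(\xi)=ae^{i\theta_0}$ with $|a|<1$, this alone does not force anything. Instead I would exploit the points $\omega=e^{-i\theta}$ on the circle. For $\xi=\pi_2(a,e^{i\theta_0})$, the map $\Phi_\omega$ sends $\xi$ into $\partial\D$ exactly when $\omega$ makes $\Phi_\omega(\xi)$ unimodular; one computes that this happens for $\omega=e^{-i\theta_0}$, with $\Phi_\omega(\xi)=e^{i\theta_0}$ (up to the usual symmetric computation). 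Then (a) gives $\Phi_{e^{-i\theta_0}}(\eta)=e^{i\theta_0}$. I then check that the set of $\eta\in\overline{\GG}$ satisfying this equation is precisely $\pi_2(\overline{\D}\times e^{i\theta_0})$. This is a short computation with $\eta=\pi_2(\mu_1,\mu_2)$: the equation $2\bar\omega\cdot$ reduces to $(e^{i\theta_0}-\mu_1)(e^{i\theta_0}-\mu_2)=0$ after clearing denominators.

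For (i), $\xi=(2e^{i\theta},e^{2i\theta})$ makes $|\Phi_\omega(\xi)|=1$ for every $\omega$ on a full circle (indeed for all $\omega$ with $\omega\neq\bar\xi$-singular points). Applying (a) for every such $\omega$ forces $\eta$ to satisfy $\Phi_\omega(\eta)=\Phi_\omega(\xi)$ for infinitely many $\omega$. Since $\Phi_\omega$ is a Möbius function of $\omega$ and these functions determine the point, this yields $\eta=\xi$, in particular $\eta$ lies in the distinguished boundary.

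The main obstacle is the boundary behaviour of $\Phi_\omega$ near its poles, where $2-\omega s\to0$. For $\xi$ in the distinguished boundary, the point $\omega=e^{-i\theta}$ is singular, so the limit argument must use $\omega$ away from that point and a uniformity/continuity argument. I would handle this by choosing only $\omega$ in a compact arc avoiding the bad point, which is still enough to determine $\eta$.
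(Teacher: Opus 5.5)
Your proposal is correct, but it takes a different route from the paper.

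\textbf{How the two arguments differ.} The paper fixes a subsequence with $f^{n_k}(z_0)\to\xi$. It composes $f^{n_k}$ with a test function and a peak function $\rho$: the test function is $s e^{-i\theta_0}/2$ in (i), and $\Phi_{e^{i\theta_0}}$ in (ii), written in its $\overline{\omega}$-convention. It then extracts a Montel limit $g$ with $g(z_0)=1$. The maximum modulus principle gives $g\equiv 1$, so the test function takes the same unimodular value at every other limit point $\eta$. You replace the Montel and maximum-principle step by the contraction inequality
\[
p_{\D}\bigl(\Phi_\omega(f^n z_0),\Phi_\omega(f^n w)\bigr)\le K_{\GG}(z_0,w),
\]
combined with the fact that Poincar\'e balls of fixed radius shrink as their centres tend to $\partial\D$. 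The test functions are the same in both arguments.

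\textbf{What each route buys.} Your route needs no subsequence extraction and no peak function. It isolates a localization lemma that holds for any two sequences at bounded Kobayashi distance, not only for iterates. In (ii) it also handles $|s(\eta)|=2$ without extra work. The only pole of $\Phi_{e^{-i\theta_0}}$ on $\overline{\GG}$ is $(2e^{i\theta_0},e^{2i\theta_0})=\pi_2(e^{i\theta_0},e^{i\theta_0})$, which already lies in the claimed face; you should say this explicitly. The paper instead excludes that case by a detour through part (i). The paper's route, for its part, never invokes the Kobayashi distance.

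\textbf{Corrections.}
\begin{enumerate}
\item You only need the elementary inequality $p_{\D}(\Phi_\omega(z),\Phi_\omega(w))\le K_{\GG}(z,w)$ for $\omega\in\overline{\D}$, together with finiteness of $K_{\GG}$. You do not need the Agler--Young max formula, the Lempert-type equality, or completeness. Completeness does not follow from tautness in any case, though $\GG$ does happen to be complete.
\item The sequences in your localization lemma should lie in $\GG$ and converge to points of $\bdy\GG$, not lie in $\bdy\GG$.
\item There is a sign slip: $\Phi_{e^{-i\theta_0}}(\xi)=-e^{i\theta_0}$, not $e^{i\theta_0}$. With $+e^{i\theta_0}$ the equation for $\eta=\pi_2(\mu_1,\mu_2)$ would give $\mu_1\mu_2=e^{2i\theta_0}$. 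With the correct value $-e^{i\theta_0}$ it gives $(\mu_1-e^{i\theta_0})(\mu_2-e^{i\theta_0})=0$, which is the factorization you state.
\end{enumerate}

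\textbf{A simplification for (i).} The obstacle you flag in (i) disappears if you take $\omega=0$. Then $\Phi_0=-s/2$ is continuous on all of $\overline{\GG}$; up to a unimodular factor it is the paper's function. Indeed $\Phi_\omega(2e^{i\theta},e^{2i\theta})=-e^{i\theta}$ for every $\omega\in\overline{\D}\setminus\{e^{-i\theta}\}$. Your lemma then gives $s(\eta)=2e^{i\theta}$. Since $|s(\eta)|=2$ and $\eta\in\overline{\GG}$, this forces $\eta=\pi_2(e^{i\theta},e^{i\theta})$, with no arcs of $\omega$ or pole-avoidance needed.
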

\noindent We shall present the proof of Theorem~\ref{P:retractofG2}, Theorem~\ref{T:fix of tetrablock}, Theorem~\ref{T:limit in G2} in Section~\ref{S:target}.
\smallskip

\noindent{\bf Concluding Remarks.}
We do not know whether Conjecture~\ref{Con:Abate} holds for any taut and
acyclic complex manifold 
$\mathscr{X}$ of dimension $3$.  Note that the proof of Theorem~\ref{T:Tichotomy}
also applies to any such complex manifold,
and therefore this leads to the following question:
\begin{itemize}
    \item Let $\mathscr{X}$ be a taut, acyclic complex manifold of dimension $3$. 
    Is it true that every periodic automorphism of such a manifold has a fixed point?
\end{itemize}
Another interesting problem is the following: 
\begin{itemize}
    \item Let $\Om$ be a bounded acyclic $\mathbb{C}$-convex domain. Does 
    $\Om$ satisfy weak Wolff--Denjoy property?
\end{itemize}
Note that $\GG$, $\mathbb{E}$, $\mathcal{P}$ are all $\mathbb{C}$-convex
domains having weak Wolff--Denjoy property. Another related problem is
whether the $n$-th symmetrized polydisc $\mathbb{G}_n$ satisfies 
weak Wolff--Denjoy property.

\section{Preliminaries}\label{S:prelims}
In this section, we recall several definitions
and results from the literature that will be used in the proofs of
our main results. In the first subsection, we recall 
certain special domains that arise in the $\mu$-synthesis problem. 
In the next subsection, we recall definitions and results concerning
the iteration theory of holomorphic self-maps. 
\smallskip

\subsection{Certain special domains}\label{SS:spdo}
We first recall the definition of symmetrized polydisc. 
Let $\pi_{n} : \mathbb{C}^n \to \mathbb{C}^n$
be the symmetrization map defined by
$\pi_{n}(z_1,\dots,z_n)=\bigl(s_1(z), s_2(z), \dots, s_n(z)\bigr)$,
where $s_k(z)$ is the $k$-th elementary symmetric polynomial given by
\[
s_k(z)
=
\sum_{1 \le i_1 < \cdots < i_k \le n}
z_{i_1} \cdots z_{i_k},
\qquad k=1,\dots,n.
\]
The \emph{symmetrized polydisc} $\mathbb{G}_n$ in $\mathbb{C}^n$ is defined by
$
\mathbb{G}_n := \pi_{n}(\mathbb{D}^n)$.
The symmetrized polydisc—particularly the symmetrized bidisc $\GG$—has been
extensively studied in operator theory and several complex variables
due to its function-theoretic significance
(see \cite[Chapter~7]{JPbook2013} and the references
therein for further details on these domains).
It turns out that  the domain $\mathbb{G}_{n}$ is  $(1,2,\cdots ,n)$-balanced domain.
In particular, $\mathbb{G}_{n}$ is contractible.
It is a fact that $\mathbb{G}_{n}$ is complete Kobayashi hyperbolic domain.
We  need the following result due to 
Edigarian--Zwonek about ${\rm Aut}(\mathbb{G}_{n})$.
\begin{result}[paraphrasing {\cite[Theorem~1 ]{EdiZwo2005}}]\label{Res:autgn}
   Let $f: \mathbb{G}_{n} \to \mathbb{G}_{n}$ be a holomorphic map. Then $f \in {\rm Aut}(\mathbb{G}_{n})$ if and only if there exists $h \in {\rm Aut}(\D)$ such that $$f(\pi_{n}(z_{1}, z_{2}, \cdots,z_{n}))=\pi_{n}(h(z_{1}),h(z_{2}), \cdots, h(z_{n})).$$ 
\end{result}

\noindent The following result for $\mathbb{G}_{2}$
is needed in the proof of Theorem~\ref{T:limit in G2}.
\begin{result}\cite[Corollary 2.2]{AglerYoung2004}\label{R:AglerYoung1}
Let $s,p \in \cplx$ then $(s,p) \in \GG$ if and only if $|s| < 2$ and for all 
$\omega \in \mathbb{T}$ we have $|\Phi_{\omega}(s,p)| < 1 $,
where  $$\Phi_{\omega}(s,p):=\frac{2\omega p-s}{2-\omega s}.$$   
\end{result}

\noindent The next result provides a complete description of a complex geodesic in a symmetrized bidisc.
\begin{result}[paraphrasing {\cite[Theorem 2]{ZwoPflug2005}}]\label{Res:compgeosymbi}

A holomorphic mapping $f : \mathbb{D} \to \GG$ is a complex geodesic if and only if it is (up to an automorphism of the unit disc and an automorphism of the symmetrized bidisc) of one of the following two forms:
\begin{align*}
f(\lambda) &= \pi_{2}\big(B(\sqrt{\lambda}),\, B(-\sqrt{\lambda})\big), \quad \lambda \in \mathbb{D},
\end{align*}
where $B$ is a non-constant Blaschke product of degree one or two with $B(0)=0$;
\begin{align*}
f(\lambda) &= \pi_{2}\big(\lambda,\, a(\lambda)\big), \quad \lambda \in \mathbb{D},
\end{align*}
where $a$ is an automorphism of $\mathbb{D}$ such that $a(\lambda) \neq \lambda$ for all $\lambda \in \mathbb{D}$.

Moreover, the complex geodesics in the symmetrized bidisc are unique (up to automorphisms of the unit disc).

\end{result}
We now recall the definition of another important domain arising in 
$\mu$-synthesis problem, namely the \emph{Tetrablock}. The {\em Tetrablock}, denoted by $\mathbb{E}$, is the set
\[
\mathbb{E}
   = \Bigl\{
       (z_1, z_2, z_3) \in \mathbb{C}^3 \,\Bigm|\,
       \bigl| z_2 - \overline{z_1}\,z_3 \bigr|
       + \bigl| z_1 z_2 - z_3 \bigr|
       < 1 - |z_1|^2
     \Bigr\}.
\]
The domain $\mathbb{E}$ is a starlike domain with respect
to the origin (see\cite[Theorem 2.7]{AWY2007}). Hence, it is contractible. It 
is a fact \cite[Corollary ~4.2]{Zwonek2013} that the tetrablock is a $\cplx$-convex domain, hence, it
follows from \cite{Nikolai2007} that $\mathbb{E}$ is a taut domain.
We now give a description of $\text{Aut}(\mathbb{E})$ as presented in 
\cite[Theorem 4.1]{AutoTetrablock} which
is needed in our proof of Theorem~\ref{T:fix of tetrablock}.
Given $\nu, \chi \in {\rm Aut}(\mathbb{D})$, write
$$\nu(z):=\omega \frac{z-\alpha}{\overline{\alpha}z-1}
\ \ \ \text{and} \ \ \ \chi(z):=\sigma \frac{z-\beta}{\overline{\beta}z-1}.$$
Consider $L_{\nu}, R_{\chi} \in {\rm Aut}(\mathbb{E})$ defined as follows:
$L_\nu(x) = \nu \cdot (x_{1},x_{2},x_{3}):=(x_{1}^{'},x_{2}^{'},x_{3}^{'})$ 
such that $$\lambda\begin{bmatrix}
      \omega &   -\omega \alpha \\
      \overline{\alpha}&-1 
\end{bmatrix}\begin{bmatrix} x_{3} &   -x_{1} \\
     x_{2}&-1 
\end{bmatrix} =\begin{bmatrix} x_{3}^{'} &   -x_{1}^{'} \\
     x_{2}^{'}&-1  
\end{bmatrix}$$
where $\lambda \in \cplx$ is chosen so that the $(2,2)$ entry of the
matrix appeared in the left-hand side of the above equation is $-1$. Similarly, 
$R_\chi(x) =  (x_{1},x_{2},x_{3})\cdot\chi :=(x_{1}^{'},x_{2}^{'},x_{3}^{'})$
such that $$\lambda\begin{bmatrix} x_{3} &   -x_{1} \\
     x_{2}&-1 
\end{bmatrix}.\begin{bmatrix}
      \sigma &   -\sigma \beta \\
      \overline{\beta}&-1 
\end{bmatrix} =\begin{bmatrix} x_{3}^{'} &   -x_{1}^{'} \\
     x_{2}^{'}&-1  
\end{bmatrix}$$
where $\lambda \in \cplx$ is chosen such that the $(2,2)$
entry of the matrix appeared in the left-hand side of the above equation is $-1$.
Let $F \in {\rm Aut}(\mathbb{E})$ be
defined by $F(x_{1},x_{2},x_{3})=(x_{2},x_{1},x_{3})$. 
Then
\begin{equation}\label{E:auttetra}
{\rm Aut}(\mathbb{E})=\left\{
L_\nu R_\chi F^{\mu}
:\;
\mu, \chi \in \text{Aut}(\mathbb{D}),
~ \mu \in \{0,1\}
\right\}.\end{equation}
Here, $F^{0}={\rm I}$, and $F^{1}=F$.
\smallskip

We now recall the definition of Pentablock introduced 
by Agler, Lykova and Young \cite{agler2015jmaa} in 2015. 
Here, we mention an equivalent definition of the
Pentablock (see \cite[Theorem 1.1, Theorem 5.2]{agler2015jmaa}).
The {\em Pentablock} is denoted by $\mathcal{P}$ and
is the set
$$ \mathcal{P}:=\left\{(a,s,p) \in \mathbb{C}^{3}:
|a|<\bigg|1-\frac{\frac{1}{2}s\overline{\beta}}{1+\sqrt{1-|\beta|^{2}}}\bigg|, (s,p) \in \mathbb{G}_{2}\right\},$$
where $\beta:={(s-\overline{s}p)}/{(1-|p|)^{2}}$. The domain $\mathcal{P}$ is starlike, $\cplx$-convex but  cannot be exhausted
by domains biholomorphic to convex ones; see \cite{Guicong2020}, \cite{PZapalao2015}. It follows from \cite[Theorem~1]{Nikolai2007} that $\mathcal{P}$ is a taut domain.
\smallskip

\subsection{Iteration Theory}\label{SS:it}
In this subsection, we recall various definitions and certain important results
pertaining to iteration theory of holomorphic self-maps on taut complex manifolds. 
\begin{definition}\label{Def:retract}
A \emph{holomorphic retraction} $\rho$ of a complex manifold $\mathscr{X}$
is a holomorphic map $\rho:\mathscr{X}\to\mathscr{X}$ such that
$\rho \circ \rho = \rho$ on $\mathscr{X}$. The image
of $\rho$ is called a \emph{holomorphic retract} of $\mathscr{X}$.
\end{definition}
Recall that a manifold $M$ is called {\em acyclic} if it is connected and the singular homology group $H_{j}(M, \mathbb{Z})=0$ for all $j >0$.
\begin{remark}\label{rem:Remark1}
   Let $\mathscr{X}$ be a complex manifold and 
   $M \subset \mathscr{X}$ is a holomorphic retract of $\mathscr{X}$.
   Then the diagram 
   $$ M \xrightarrow{\ i\ }  \mathscr{X} \xrightarrow{\ \rho\ } M$$
   induces the following diagram at the level of homology groups
$$
H_j(M,\mathbb{Z}) \xrightarrow{\ i_* \ } H_{j}( \mathscr{X},\mathbb{Z}) \xrightarrow{\ \rho_* \ } H_{j}(M,\mathbb{Z}),
$$
such that $\rho_* \circ i_* = \mathrm{id}_{H_{j}(M,\mathbb{Z})}$.
In particular, if $ \mathscr{X}$ is acyclic, 
then $M$ is also acyclic.
\end{remark}

For a taut complex manifold $ \mathscr{X}$, the following result due to 
Abate is very useful in understanding the behavior of 
$\{f^n\}$, where $f\in{\rm Hol}(\mathscr{X, X})$. 

\begin{result}[paraphrasing {\cite[Theorem~2.1.29]{Abate1989}}]\label{Res:exislimman}
Let $ \mathscr{X}$ be a taut complex manifold,
and $f\in{\rm Hol}( \mathscr{X},\, \mathscr{X})$. 
Assume that the sequence $\{{f^k}\}$ is not compactly divergent.
Then there exists a submanifold $M$ of $ \mathscr{X}$
and a holomorphic retraction $\rho : \mathscr{X} \to M$
such that every limit point $h\in{\rm Hol}( \mathscr{X}, \mathscr{X})$ of $\{ {f^k}\}$ is of the
form $h=\rho \circ \gamma $, where $\gamma$ is an automorphism of $M$.
Moreover, $\rho$ itself is a limit point of the sequence $\{{f^k}\}$.
\end{result}
\begin{remark}\label{Rem:Rem2}
 The manifold $M$ in the above result is called the {\em limit manifold} of the 
map $f$ and $\rho$ is called the {\em limit retraction} of $f$. It also easily follows
from this result that $f|_{M}\in{\rm Aut}(M)$.
We refer the reader to \cite[Section~2.1.3]{Abate1989} for more details.
If  dim$(M)={\rm dim }(\mathscr{X})$ in Result~\ref{Res:exislimman} 
then $M=\mathscr{X}$ and $f \in {\rm Aut}(\mathscr{X})$.
\end{remark}

Given a taut complex manifold $\mathscr{X}$ and
$f\in {\rm Hol}(\mathscr{X}, \mathscr{X})$ such that $\{f^n\}$ is not compactly 
divergent, let us denote by $\Gamma(f)$
the set of all limit points of $\{{f^n}\}$ in ${\rm Hol}(\mathscr{X}, \mathscr{X})$.
We now recall the following two results due to Abate.   
 \begin{result}[paraphrasing {\cite[Theorem 1.2]{Abate1991}}]\label{R:limitgroup}
  Let $\mathscr{X}$ be a taut complex manifold,
  and take $f \in \mathrm{Hol}(\mathscr{X},\mathscr{X})$
  such that the sequence $\{f^{n}\}_{n \in \mathbb{N}}$
  is not compactly divergent. Then there exist integers
  $q(f), r(f) \in \mathbb{N}$ such that
$\Gamma(f) \cong \mathbb{Z}_{q(f)} \times \mathbb{T}^{r(f)} .$
More precisely, $\Gamma(f)$ is isomorphic to the
compact abelian subgroup of $\mathrm{Aut}(M)$
generated by $\varphi = f|_M$, where $M$ is the limit manifold of $f$.
\end{result}

\begin{result}[paraphrasing {\cite[Proposition 2.16]{Abate1991}}]\label{R:limitgroup1}
Let $\mathscr{X}$ be a taut Stein manifold.
Let $f \in \mathrm{Hol}(\mathscr{X},\mathscr{X})$ be
such that $\{f^{n}\}$ is not compactly divergent
such that $\Gamma(f) \cong \mathbb{Z}_{q(f)} \times \mathbb{T}^{r(f)}$.
Let $m(f)$ be the dimension of the limit manifold of $f$.
Then we have 
\begin{enumerate}
\item[$(i)$] $r(f) \leq m(f)$;
\smallskip

\item[$(ii)$] if $\mathscr{X}$ is acyclic and $r(f) = m(f)$,
then $f$ has a fixed point. 
\end{enumerate}
\end{result}
\noindent As mentioned in the introduction, the next result
states that each acyclic, taut complex manifold of 
dimension at most 2 satisfies the weak Wolff--Denjoy Theorem. 
\begin{result}[paraphrasing {\cite[Corollary 2.14]{Abate1991}
}]\label{R:Abate}
Let $\mathscr{X}$ be an acyclic taut complex manifold of dimension at most two
and let $f \in Hol(\mathscr{X}, X)$. Then  $\{f^{k}\}$ is not 
compactly divergent if and only if $f$ has a fixed point in $\mathscr{X}$.
\end{result}

Let $G$ be a group acting on a manifold $M$ via the action $A: G \times M \to M$. 
Then the fixed point set of the action $M^{G}$ is defined as follows:
\[
M^{G}:=\{ x\in M : A(g, x)=x \ \forall g\in G\}.
\]
We shall need the following two results in our proof of Theorem~\ref{T:Tichotomy}. 
\begin{result}[paraphrasing {\cite[Theorem~2.8]{Abate1991}}]\label{R:topo2}
 Let $T$ be a torus group acting smoothly on
 an orientable  manifold $\mathscr{X}$ of finite type. Suppose that $H^{j}(X; \mathbb{Q})=(0)$ for all odd $j$. 
 Then $\mathscr{X}^{T}$ is a nonempty closed (not necessarily connected)
 submanifold of $\mathscr{X}$ of finite topological type. 
\end{result}
\begin{result}[paraphrasing{\cite[Corollary~IV.1.5]{Bredon}}]\label{R:Topo1}
Let $T$ be a torus group acting smoothly on an acyclic manifold $\mathscr{X}$.
Then $\mathscr{X}^{T}$ is acyclic.
\end{result}
\smallskip

\section{Proofs of Theorem~\ref{T:WWD}, Theorem~\ref{T:Tichotomy},
Theorem~\ref{T:WWD1}}\label{S:proof}
We begin this section with the proof of Theorem~\ref{T:WWD}. 

\subsection{The proof of Theorem~\ref{T:WWD}}\label{SS:proofwwd}
\begin{proof}
First we consider the case when $\Om=\GG$. 
As noted in Subsection~\ref{SS:spdo}, $\mathbb{G}_n$, $n\geq 2$, is contractible
and complete Kobayashi hyperbolic, and hence it is acyclic and taut. 
Therefore, by Result~\ref{R:Abate}, $\GG$ has weak Wolff--Denjoy property. 
\smallskip

For the other cases, let $f \in \text{Hol}(\Om,\,\Om)$ be a fixed point free map. 
Assume, to get a contradiction,
that the sequence is not compactly divergent. 
First, note that each $\Om$ is a taut manifold. 
Let  $M$ be the limit  manifold of $f$ and
$\rho :\Om \to M $ be the corresponding retraction as given by Result~\ref{Res:exislimman}.
Since $\text{dim}(\Omega)=3$, so $\text{dim}(M)\leq 3$.
The domain $\mathbb{G}_{3}$
is quasi-balanced and $\mathbb{E}, \mathcal{P}$ are star-shaped.  
Consequently, each $\Omega$ is contractible, hence, it is acyclic.
Therefore, $M$ is taut, and it follows from
Remark~\ref{rem:Remark1} that $M$ is an acyclic submanifold of $\Om$. 
Consider $\phi=f|_{M}$ and recall that $\phi\in{\rm Aut}(M)$. 
Two cases arise:
first suppose $\dim{M}\leq 2$. 
In this case, it follows from Result~\ref{R:Abate} the sequence $\{\phi^n\}$ is compactly 
divergent, a contradiction to our assumption.
\smallskip

Now suppose $\text{dim}(M)=3$. In this case, it 
follows from Remark~\ref{Rem:Rem2} that
$M=\Om$ and $f \in \text{Aut}(\Omega)$.
We now consider the three cases:
\smallskip

\noindent\textbf{Case 1.} $\Omega:=\mathbb{E}$. 
\smallskip

\noindent It is a fact that  the triangular set 
$\mathcal{T}:=\{(a,b,ab): a, b \in \mathbb{D}\}\subset\mathbb{E}$ is invariant under 
every automorphism of $\mathbb{E}$, see \cite[Remark~6.6]{AWY2007}.
Note that the set $\mathcal{T}$ is biholomorphic to the bidisc. 
Let $\psi: \mathbb{D}^{2} \to \mathcal{T}$ be a biholomorphic embedding.
Then $\psi^{-1}\circ  f \circ \psi : \mathbb{D}^{2} \to \mathbb{D}^{2}$ is
a biholomorphism that does not have a fixed point.
Since $\mathbb{D}^2$ is convex, 
the sequence $\{g^n\}$ is compactly divergent where 
$g=\psi^{-1}\circ  f \circ \psi$. But this
implies that the sequence of iterates of $f|_{\mathcal{T}}$ is 
compactly divergent. This is a contradiction to our
assumption.
\smallskip

\noindent \textbf{Case~2.} $\Omega:=\mathbb{G}_{3}$. 
\smallskip

\noindent As $f \in \text{Aut}(\mathbb{G}_{3})$, by \cite[Theorem~1]{EdiZwo2005} 
there exists $h \in \text{Aut}(\mathbb{D})$ such that
\begin{align} \label{E:autotio}
f(\pi_3(z_{1},z_{2},z_{3}))=\pi_3(h(z_{1}),h(z_{2}),h(z_{3})) ~~\forall z_{1}, z_{2}, z_{3} \in \mathbb{D}.
\end{align}
Here $\pi_3:\mathbb{C}^3\to\mathbb{C}^3$ is the symmetrization map
as defined in Subsection~\ref{SS:spdo}. 
Observe that the map $h \in \text{Aut}(\mathbb{D})$ cannot have a fixed point in $\mathbb{D}$, 
otherwise, $f$ will have a fixed point in $\mathbb{G}_{3}$.
Appealing to the weak Denjoy--Wolff property for $\mathbb{D}$, 
we conclude that $\{h^{n}\}$ is compactly divergent on 
$\mathbb{D}$.
It follows from \eqref{E:autotio} that 
$$f^{n}(\pi(z_{1},z_{2},z_{3}))=\pi(h^{n}(z_{1}), h^{n}(z_{2}), h^{n}(z_{3}))$$ for
all $n \in \mathbb{N}$ for  all $z_{1},z_{2}, z_{3} \in \mathbb{D}$.
This implies that $\{f^{n}\}$ is compactly divergent on $\mathbb{G}_{3}$ and we are done in this case too.
\smallskip

\noindent\textbf{Case~3.} $\Om:=\mathcal{P}$. 
\smallskip

\noindent By \cite[Theorem~15]{Kosiski2015}, we know that each element in 
$\text{Aut}(\mathcal{P})$ is of the following form 
\begin{align}
f_{\omega,\,\gamma} (a ,\lambda_{1}+\lambda_{2}, \lambda_{1}\lambda_{2}) =
\bigg(\frac{\omega(1-|\alpha|^2 )a}{ 1-\overline{\alpha}(\lambda_{1}+\lambda_{2})+\overline{\alpha}^{2}\lambda _{1} \lambda_{2}}
, \gamma(\lambda_{1} ) + \gamma(\lambda_{2} ), \gamma(\lambda_{1} )\gamma(\lambda_{2} )\bigg)
\end{align}
\noindent
where $(a, \lambda_{1} + \lambda_{2}, \lambda_{1} \lambda_{2} ) \in \mathcal{P}$,
$\lambda_{1},   \lambda_{2} \in  \mathbb{D}$, $\gamma\in{\rm Aut}(\mathbb{D})$
and $\omega \in  \mathbb{T}$, $\alpha =\gamma^{-1}(0)\in  \mathbb{D}$.
Therefore, we can assume that the map $f=f_{\omega,\,\gamma}$
for some choice of
$\omega\in\mathbb{T}$ and $\gamma\in{\rm Aut}(\mathbb{D})$.
We claim that $\gamma$ does not
have a fixed point in $\mathbb{D}$, for if $z_0\in\mathbb{D}$ is a fixed point,
then it follows that  $f_{\om, \gamma}(0, 2z_{0}, z_{0}^{2})=(0, 2z_{0}, z_{0}^{2})$. 
However, by our assumption, $f$ does not have a fixed point. It follows, as before, that
the sequence $\{\gamma^n\}$ is compactly divergent, in fact,
we know that $\gamma^n\to\omega_{0} \in \mathbb{T}$ as $n \to \infty$. 
Consequently,  $f_{\om, \gamma}^{n}(0,0,0)=(0, 2\gamma^{n}(0),(\gamma^{n}(0))^{2})$.
Since $(2\gamma^{n}(0),(\gamma^{n}(0))^{2}) \to \partial_{s}\mathbb{\mathbb{G}}_{2}$,
it follows that $f^n_{\omega,\gamma} (0) \to \partial_{s}\mathcal{P}$.
In particular, $\{f^n\}$ is compactly divergent, a contradiction to our assumption. 
\smallskip

This concludes the proof of the theorem. 
\end{proof}

\subsection{Proofs of Theorem~\ref{T:Tichotomy},
Theorem~\ref{T:WWD1} and Corollary~\ref{col3.9}}. 
In this subsection, we present the proofs of Theorem~\ref{T:Tichotomy},
Theorem~\ref{T:WWD1} and Corollary~\ref{col3.9}. 

\begin{proof}[Proof of Theorem~\ref{T:Tichotomy}]
    Let $f\in\rm Hol(\Omega, \Omega)$ be such that
    $\rm{Fix}({f}) = \emptyset$ and $\{f^n\}$ is not compactly divergent.
    By Result~\ref{Res:exislimman},
    there exists a holomorphic retract $M$ of $\Omega$
    such that $f|_{M}\in{\rm Aut}(M)$. 
    As noted in Remark~\ref{rem:Remark1}, 
    $M$ is acyclic and taut.
    Observe if ${\rm dim}(M)\leq 2$, then by Result~\ref{R:Abate},
    we deduce that $f|_{M}$ has either a fixed point or it is compactly divergent.
    This leads to a contradiction to our assumption.
    Therefore, ${\rm dim}(M)=3$. As noted in Remark~\ref{Rem:Rem2}, in this case 
    we have $M=\Om$ and $f\in $\text{Aut}$(\Omega)$. Appealing 
    to Result~\ref{R:limitgroup} and Result~\ref{R:limitgroup1},
    it follows that
    $\Gamma(f)\leq \text{Aut}(\Omega)$ is isomorphic to 
    $\mathbb{Z}_p\times\mathbb{T}^r$
    for some nonnegative integer $p$ and $r \in \{0,1,2\}$.
    Let $h: \Gamma(f)\to \mathbb{Z}_p\times\mathbb{T}^r $ be a group isomorphism.
    We  consider the following cases:
    \smallskip
    
\noindent\textbf{Case 1.} $r=0$. 
\smallskip

\noindent In this case, if $h(f)=\overline{j}$, then  $h(f^{p})=\overline{0}$ whence
$f^p={\rm I}$ and we are done.
\smallskip

\noindent\textbf{Case 2.} $r=2$.  
\smallskip

\noindent In this case, suppose 
$h(f)=(\overline{j}, e^{2\pi i \theta_1} ,e^{2\pi i \theta_2})$
for some $\overline{j}\in\Z_{p}$ and $\theta_{j} \in [0,1)$ for $j=1,2$.
If $\theta_{j} \in \mathbb{Q}$  with $\theta_j={p_j}/{q_j}$
for $j \in \{1,2\}$, then  we have 
\begin{align*}
    (h(f))^{pq}=(\overline{j} , (e^{2\pi i\theta_1} , e^{2\pi i\theta_2}))^{pq}=(0,(1,1))=h(f^{pq}),
\end{align*}
where $q=\text{l.c.m} (q_{1}, q_{2})$. This implies that $f^{pq}={\rm I}$,
i.e.,  ${f}$ is a periodic  automorphism of $\Om$. Therefore, we are done
when $\theta_j\in\mathbb{Q}$, $j=1,2$. 
\smallskip

For the other case, we make the following claim.
\smallskip

\noindent{\bf Claim.}
Suppose $\theta_{j}\in [0,1) \setminus \mathbb{Q} $ for some $j=1,2$,
then there exists $l \in \mathbb{N}$ such that
${\rm Fix}(f^{l})= \Omega^{T}$ where $T\leq \text{Aut}(\Omega)$ is either
isomorphic to $\mathbb{T}$ or $\mathbb{T}^2$.
\smallskip

\noindent
To prove the claim, we consider two
cases. First assume that  $\theta_{1}={p_1}/{q_1} \in \mathbb{Q}$ and
$\theta_2\in {\mathbb{R}}\setminus{\mathbb{Q}}$.
In this case, if  $h(f)$ = $(\overline{j}, e^{2\pi i \theta_1} ,e^{2\pi i \theta_2})$,
then $h(f^{pq_1}) = (\overline{0}, 1, e^{2\pi i\beta})$, 
where $\beta$ is the fractional part of $ p q_{1} \theta_{2}$.
Since $\theta_2\in \mathbb{R}\setminus\mathbb{Q}$, 
$\beta$ is also an irrational number. 
Therefore, we have 
$$
\overline{\left\{e^{2\pi i\beta n}: n \in \mathbb{N}\right\}}=\mathbb{T}.
$$
In this case, we shall show that 
$l=pq_1$ and $T=h^{-1}(\overline{0}\times 1\times\mathbb{T})$. 
To see that, let $x\in \text{Fix}(f^{pq_1})$
and $(\overline{0}, 1, e^{2\pi i \alpha})
\in \{\overline{0}\}\times \{1\}\times\mathbb{T}$
for some $\alpha \in [0,1)$. Then there exists a sequence
$\{m_{k}\}_{k \in \mathbb{N}}$ such that 
$(\overline{0}, 1, e^{2\pi i  m_{k} \beta })
\to (\overline{0}, 1, e^{2\pi i \alpha})$.
Therefore, we deduce that  
    \begin{align*}
       h^{-1}(\overline{0}, 1, e^{2\pi i \alpha})(x)
       &=h^{-1}(\lim_{k \to \infty}(\overline{0}, 1, e^{2\pi i  m_{k} \beta }))(x) \notag\\
       &=\lim_{k \to \infty}(h^{-1}(\overline{0}, 1, e^{2\pi i \beta }))^{m_{k}}(x)\notag \\
       &=\lim_{k \to \infty}(f^{pq_{1}})^{m_{k}}(x)=x.
    \end{align*}
    This implies that $x\in\Omega^{T}$,
    where $T=h^{-1}(\overline{0}\times 1\times\mathbb{T})$, 
    whence it follows that
    $\text{Fix}(f^{pq_1})\subset\Omega^{T}$. 
    Conversely, if $x\in\Omega^{T}$ then 
    by definition $g\cdot x=x$ for all $g\in {T}$. 
    In particular,
    $h^{-1}(\overline{0}, 1, e^{2\pi i \beta})\cdot x=x$, i.e.,
    $f^{pq_1}(x)=x$ whence $x\in {\rm Fix}(f^{pq_1})$. This also implies that
    $\Om^T\subset {\rm Fix}(f^{pq_1})$, and we are done in this case. 
    The case $\theta_1 \in \mathbb{R}\setminus \mathbb{Q}$ and $\theta_2\in \mathbb{Q}$
    can be dealt in a similar fashion and in this case
    $T=h^{-1}(\overline{0}\times\mathbb{T}\times{1})$.
    \smallskip

    Now we consider the case when
    $\theta_{1}, \theta_{2}\in \mathbb{R}\setminus\mathbb{Q}$ and 
    $h(f)=(\overline{j}, e^{2\pi i\theta_1}, e^{2\pi i\theta_2})$.
    Then $h(f^{p})=(\overline{0}, e^{2\pi i \beta_{1}}, e^{2\pi i \beta_{2}})$,
    where $\beta_{j} \in \mathbb{R}\setminus \mathbb{Q}$ is
    the fractional part of $p\theta_{j}$ for $j=1,2$.
    We deal with this case by considering two subcases.
    First  we assume that $1, \beta_1, \beta_2$ are linearly
    independent over $\mathbb{Q}$, and show that $l=p$ and 
    $T= h^{-1}(\overline{0} \times \mathbb{T}^{2})$.
    From Kronecker--Weyl's Theorem (see \cite{KRONECKERwils}) we have 
  \begin{align}\label{E:eq1}
    \overline{\{{(e^{2\pi i\nu\beta_1 },e^{2\pi i \nu\beta_2 })}: \nu\in\mathbb{N}\}}=\mathbb{T}^2.
    \end{align} 
   Let $x \in \text{Fix}(f^{p})$ and
   $h^{-1}((\overline{0},e^{2\pi i\alpha_1 },
   e^{2\pi i \alpha_2 })) \in h^{-1} (\overline{0} \times\mathbb{T}^{2})$.
   From \eqref{E:eq1} we conclude that there exists
   a sequence $\nu_{k} \in \mathbb{N}$ such that
   $(e^{2\pi i\nu_{k}\beta_1 },e^{2\pi i \nu_{k}\beta_2 })
   \to (e^{2\pi i\alpha_1 }, e^{2\pi i \alpha_2 })$ as $k \to \infty$. 
   Therefore, we have 
\begin{align*}
      h^{-1}(\overline{0}, e^{2\pi i\alpha_1 }, e^{2\pi i \alpha_2 })(x)&= h^{-1}(\lim_{k \to \infty}(\overline{0}, e^{2\pi i\nu_{k}\beta_1 },e^{2\pi i \nu_{k}\beta_2 }))(x)\\
      &=\lim_{k \to \infty}h^{-1}(\overline{0}, e^{2\pi i\nu_{k}\beta_1 },e^{2\pi i \nu_{k}\beta_2 })(x)\\
      &=\lim_{k \to \infty}(h^{-1}(\overline{0}, e^{2\pi i \beta_{1}},e^{2\pi i \beta_{2}}))^{\nu_{k}}(x)\\
      &=\lim_{k\to\infty}f^{p\nu_k}(x)=x.
   \end{align*}
 Therefore, ${\rm Fix}(f^{p}) \subset \Om^{T}$ where $T=h^{-1}(\overline{0} \times \mathbb{T}^{2})$. 
 Conversely, if $x\in\Omega^{T}$ then 
    by definition $g\cdot x=x$ for all $g\in {T}$. 
    In particular,
    $h^{-1}(\overline{0}, e^{2\pi i \beta_{1}}, e^{2\pi i \beta_{2}})\cdot x=x$, i.e.,
    $f^{p}(x)=x$ whence $x\in {\rm Fix}(f^{p})$. Consequently,
    $\Om^T\subset {\rm Fix}(f^{p})$, and we are done in this subcase.
     We now consider the subcase that   $\beta_1, \beta_2$,
    $1$ is linearly dependent over $\mathbb{Q}$.
    Observe that if 
    $$
    G:=\overline{\{(e^{2\pi i \nu \beta_{1}},e^{2 \pi i \nu \beta_{2}}): \nu \in  \mathbb{Z}}\},
    $$
    then $\{\overline{0}\} \times G \leq \{\overline{0}\} \times \mathbb{T}^{2}$.
    Clearly $\{\overline{0}\} \times G $ is a closed subgroup of $\{\overline{0}\} \times \mathbb{T}^{2}$.
    Since both $\beta_{1}, \beta_{2} $ are irrational numbers with
    dim$_{\mathbb{Q}}\{\beta_1, \beta_2, 1\}=2$, 
    there exist $a, b,c \in \mathbb{Z}$ such that not all of them are zero,   
    and $a\beta_{1}+b \beta_{2}+c=0$. Hence, $G \leq \{(z_{1}, z_{2}) \in \mathbb{T}^{2}:z_{1}^az_{2}^b=1\} \cong \mathbb{T}$.
    Hence, the group $G$ can be thought of as an infinite, compact subgroup of the group $\mathbb{T}$. Consequently, $G\cong \mathbb{T}$. We show that in this subcase $l=p$ and $T=h^{-1}(\overline{0}\times G) \cong \mathbb{T}$. 
    To see this, let $x \in {\rm Fix }(f^{p})$ and $h^{-1}(\overline{0}, g) \in  h^{-1}(\overline{0}, G).$ Note that $f^{-p}(x)=(f^{p})^{-1}x=x$. Consequently, $f^{\nu p}(x)=x$ for all $\nu \in \mathbb{Z}$.
    Let $(\nu_k)\subset\mathbb{Z}$ be such that 
    $(\overline{0},e^{2\pi i \nu_{k}\beta_{1}},e^{2 \pi i \nu_{k} \beta_{2}})\to
    (\overline{0}, g)$. Therefore, we deduce that 
\begin{align*}
  h^{-1}(\overline{0}, g)(x)&=h^{-1}(\lim_{\nu_{k} \to \infty}(\overline{0},e^{2\pi i \nu_{k}\beta_{1}},e^{2 \pi i \nu_{k} \beta_{2}}))(x) \\
  &=\lim_{k \to \infty}(h^{-1}(0,e^{2 \pi i \beta_{1}},e^{2 \pi i \beta_{2}}))^{\nu_{k}}(x)\\
  &=\lim_{k \to \infty}f^{\nu_{k}p}(x)=x.
\end{align*}
This implies that ${\rm Fix}(f^{p}) \subset \Omega^{T}$.
Conversely, if  $x \in \Omega^{T}$ then $g.x=x$ for all $g \in T$.
In particular, 
$h^{-1}(0, e^{2\pi i \beta_{1}}, e^{2\pi i \beta_{2}})(x)=f^{p}(x)=x$.
This implies that $x\in{\rm Fix}(f^p)$, and hence 
$\Om^T\subset {\rm Fix}(f^{p})$.
This establishes
the claim.\hfill $\blacktriangleleft$

\smallskip

 Here $\Omega$ is an acyclic, taut manifold and $T$ is a  torus group.
 Therefore, from Result~\ref{R:topo2} we conclude that
 $\Om^{T}$ is non-empty. 
 From Result ~\ref{R:Topo1}, we conclude that $\Omega^{T}$
 is acyclic, in particular, it is connected.
 Appealing to a result due to Vigu\'e \cite{Vigue1986},
 we conclude that $\text{Fix}(f^{l})$
 is a closed complex submanifold of $\Omega$, and hence it is taut.
 Therefore, $\text{Fix}(f^{l})=\Omega^{T}$ is a non-empty,
 closed, acyclic, taut submanifold for all $l \in \mathbb{N}$ considered
 in the above claim.
Note that, $f(\text{Fix} (f^{l}))\subset \text{Fix}(f^{l})$.
Therefore, if ${\rm dim}(\Omega^{T})\leq 2$,
then again from Result~\ref{R:Abate} either
$f$ has a fixed point or $(f^n)$ is compactly divergent.
This contradicts our assumption.
Hence, ${\rm dim}(\text{Fix}(f^{l}))=3$, 
i.e., $\text{Fix}(f^{l})=\Omega$. 
Consequently, $f$ is a periodic automorphism whence we
are done in this case. 
\smallskip

The case $r=1$ can be dealt in a similar way.
This establishes the theorem.
\end{proof}

We now present the proof of Theorem~\ref{T:WWD1}. 

\begin{proof}[Proof of Theorem~\ref{T:WWD1}]
 We show that every periodic automorphism has a fixed point.
 Let $f\in\text{Aut}(\Omega)$ be a periodic automorphism of period $p\geq 2$. 
 Then $\{I_{\Omega},f,f^2,...,f^{p-1}\}$ is a finite
 cyclic subgroup of ${\rm Aut}(\Omega)$. Now from the hypothesis,
 there exists a torus subgroup $T$ of ${\rm Aut}(\Om)$ 
 such that 
 $$
 \{{\rm I}_{\Om},f,f^2,\cdots,f^{p-1}\}\leq T \leq\text{Aut}(\Omega).
 $$
 Therefore,  the action $\mathbb{Z}_p\times\Omega\rightarrow\Omega$
 defined by $(\overline{j}, x)\rightarrow f^j(x)$ has an extension to the action
 $T \times\Omega \rightarrow\Omega$ . Now appealing to Result~\ref{R:topo2},
 the action $ T \times\Omega\rightarrow\Omega$
 has a fixed point, i.e., $\Omega^{T}\neq\emptyset$,
 in particular $f^j(x)=x , \forall j\in\{0,1,...,,p-1\}$.
 Hence, the map  $f$ has fixed point in $\Om$.
\end{proof}

\begin{proof}[Proof of Corollary\ref{col3.9}]
    Let $H \leq \text{Aut}(\Omega)$ be a finite cyclic subgroup.
    Then  $\psi(H)\leq\text{Aut}(\mathbb{D})$ is a finite cyclic subgroup,
    where $\psi : \text{Aut}(D)\rightarrow\text{Aut}(\mathbb{D})$ is a group
    isomorphism. Since every finite cyclic subgroup of ${\rm Aut}(\D)$ is
    contained in a circle subgroup of ${\rm Aut}(\D)$,
    invoking Theorem~\ref{T:WWD} gives the result.
    \end{proof}

\begin{remark}\label{Rem:autg3}
    By Agler's result $\text{Aut}(\mathbb{G}_3)\cong\text{Aut}
    (\mathbb{D})$. $\mathbb{G}_3$ is taut and acylic. Hence,
    it follows from the corollary above that $\mathbb{G}_3$ has 
    weak Wolff--Denjoy property.
\end{remark}

\section{Fixed point set and Target set}\label{S:target}
In this section we present the proofs of our results related to the study of 
the fixed point set of holomorphic self-map of $\mathbb{G}_{2}$
and $\mathbb{E}$, namely 
Theorem~\ref{P:retractofG2} and Theorem~\ref{T:fix of tetrablock} 
respectively. We also present the 
proof of our result about the target set of a fixed 
point free self-map of $\GG$, namely Theorem~\ref{T:limit in G2}.
\subsection{Proof of Theorem~\ref{P:retractofG2}}\label{SS:fixsymbi}

\begin{proof}[Proof of Theorem~\ref{P:retractofG2}]
  Let $f \in \text{Hol}(\mathbb{G}_{2}, \mathbb{G}_{2})$
  be such that $\text{Fix}(f) \neq \emptyset$. 
  Clearly $\{f^{n}\}$ is not compactly divergent.
  Let $M$ be the limit manifold of $f$ and
  $\rho:\mathbb{G}_{2}\to M$ be 
  the holomorphic retraction given by Result~\ref{Res:exislimman}.
  Since $M$ is a holomorphic retract of $\mathbb{G}_{2}$,
  hence, $M$ is a closed, simply connected, noncompact,
  hyperbolic,  complex submanifold of $\mathbb{G}_{2}$.
  Moreover, since 
  $\rho$ itself is a limit point of $\{f^n\}$, we have
  $\text{Fix}(f) \st M$. We now consider
  the possible cases of ${\rm dim}(M)$.
  \smallskip
  
  \noindent\textbf{Case 1.} ${\rm dim}(M)=0$. 
  \smallskip
  
  \noindent Then $M$ is singleton set.
  Consequently, $\text{Fix}(f) $ is also a singleton,
  hence a retract.
  \smallskip

  \noindent\textbf{Case 2.} ${\rm dim}(M)=1$. 
  \smallskip
  
  \noindent If $\text{dim}(M)=1$, then by
  the uniformization theorem we conclude that $M$
  is biholomorphic to $\mathbb{D}$.
  Let $\varphi: \mathbb{D} \to M $ be a biholomorphism.
  It follows from Result~\ref{Res:exislimman} that
  $f|_{M} \in \text{Aut}(M)$. Therefore, $\varphi^{-1}\circ f\circ \varphi: \mathbb{D} \to \mathbb{D}$ is an automorphism.
  Indeed, the map $\varphi^{-1}\circ f\circ \varphi$
  either has a unique fixed point in $\mathbb{D}$ or
  it is identically equal to the identity automorphism
  of $\mathbb{D}$.
Note that  $\varphi^{-1}({\rm Fix}(f))={\rm Fix}(\varphi^{-1}\circ f\circ \varphi)$. Consequently, it follows that either
  ${\rm Fix}(f)$ is singleton or it is equal to  $M$.
  In both the cases ${\rm Fix}(f)$ is a holomorphic retract,
  and in particular, it is connected.
  \smallskip

\noindent\textbf{Case 3.} ${\rm dim}(M)=2$. 

\noindent Then $M=\mathbb{G}_{2}$, and as noted in Remark~\ref{Rem:Rem2},
$f \in \text{Aut}(\mathbb{G}_{2})$.
By Result~\ref{Res:autgn}, 
there exists an $h \in \text{Aut}(\mathbb{D})$ 
such that
\begin{align}\label{E:autobi}
   f(s,p)&=(h(z_{1})+h(z_{2}), h(z_{1})h(z_{2}))
\end{align}  
for all $(s,p)=(z_{1}+z_{2},z_{1}z_{2})$
with $(z_{1},z_{2}) \in \mathbb{D}^{2}$. This case is dealt with the following subcases:
\smallskip

\noindent\textbf{Subcase 3A: $h={\rm I}_{\D}$.}
\smallskip

\noindent Then $f={\rm I}_{\GG}$, so ${\rm Fix}(f)=\GG$, which is trivially a holomorphic retract of itself.
\smallskip

\noindent\textbf{Subcase 3B: $h^2 \neq {\rm I}_{\D}$.}
\smallskip

\noindent Let $z_{1},z_{2} \in \D$ such that $\pi_2(z_1,z_2)\in{\rm Fix}(f)$. Then
\begin{align}\label{E:fixeuality}
f(\pi_2(z_1,z_2))=\pi_2\bigl(h(z_1),h(z_2)\bigr)=\pi_2(z_1,z_2)   
\end{align}
\noindent
It follows from the above relation that either $h(z_{j})=z_{j}$ for $j \in \{1,2\}$ or $h(z_{1})=z_{2}$ and $h(z_{2})=z_{1}$. 
In the first alternative, since $h\neq{\rm I}_{\D}$, and therefore $h$ has at most one fixed point in $\D$, we have $z_1=z_2=\lambda_0$, 
where $\lambda_0$ is the unique fixed point of $h$ in $\D$.
In the second alternative,
\[
h^2(z_1)=z_1,\qquad h^2(z_2)=z_2.
\]
Since $h^2\neq{\rm I}_{\D}$, the automorphism $h^2$ has at most one fixed point in $\D$. Thus $z_1=z_2$.  Consequently, $h(z_1)=z_1$, and hence again $z_1=z_2=\lambda_0$, where again $\lambda_{0}$ is the unique fixed point of $h$ in $\D$.
Therefore, it follows that 
\[
{\rm Fix}(f)=\bigl\{\pi_2(\lambda_0,\lambda_0)\bigr\}
       =\bigl\{(2\lambda_0,\lambda_0^2)\bigr\}.
\]
Thus ${\rm Fix}(f)$ is a singleton and hence a holomorphic retract of $\GG$.
\smallskip

\noindent\textbf{Subcase 3C: $h^2={\rm I}_{\D}$ and $h\neq{\rm I}_{\D}$.}
\smallskip

\noindent In this case, $h$ has a unique fixed point
$\lambda_0\in\D$, namely
the midpoint of the unique geodesic segment connecting $0$ and $h(0)$. 
We now determine the fixed-point set of $f$. 
If $z\in\D$, then
\[
f\bigl(\pi_2(z,h(z))\bigr)
 =\pi_2\bigl(h(z),h^2(z)\bigr)
 =\pi_2\bigl(h(z),z\bigr)
 =\pi_2\bigl(z,h(z)\bigr).
\]
Therefore, 
$
\bigl\{\pi_2(z,h(z)):z\in\D\bigr\}\subset {\rm Fix}(f).
$
Conversely, if $\pi_2(z_1,z_2)\in \rm Fix(f)$, 
again from the relation \eqref{E:fixeuality} we get  either $h(z_{j})=z_{j}$ for $j \in \{1,2\}$ or
$h(z_{1})=z_{2}$ and $h(z_{2})=z_{1}$. 
In the first case, the uniqueness of the fixed point of
$h$ gives $z_1=z_2=\lambda_0$. Clearly 
$(2\lambda_{0}, \lambda_{0}^2) \in \{\pi_{2}(z, h(z)): z\in \D\}$.
In the second case, $z_2=h(z_1)$, and
hence $\pi_{2}(z_{1},z_{2})=(z_{1}+h(z_{1}),z_{1}h(z_{1})).$ Consequently,
\begin{align}\label{E:set1}
{\rm Fix}(f)=\bigl\{\pi_2(z,h(z)):z\in\D\bigr\}.
\end{align}
Note that, since $h(\lambda_0)=\lambda_0$, the point
$
\pi_2(\lambda_0,h(\lambda_0))
 =\pi_2(\lambda_0,\lambda_0)
 =(2\lambda_0,\lambda_0^2)
$
belongs to $\rm{Fix}(f)\cap \mathcal{R}$, where
$
\mathcal{R}:=\bigl\{\pi_2(\lambda,\lambda):\lambda\in\D\bigr\}
       =\bigl\{(2\lambda,\lambda^2):\lambda\in\D\bigr\}
$
denotes the royal variety.
Indeed, if $\pi_2(z,h(z))\in\mathcal{R}$,
then $z=h(z)$, and therefore $z=\lambda_0$.
Thus
$
\rm Fix(f)\cap\mathcal{R} 
 =\bigl\{(2\lambda_0,\lambda_0^2)\bigr\}.
$
\smallskip

We now prove that the set in~\eqref{E:set1} is
a holomorphic retract. Choose $b\in \rm{Aut}(\D)$ such
 that $b(\lambda_0)=0$, and let $A\in \rm{Aut}(\GG)$
 be the automorphism induced by $b$:
\[
A\bigl(\pi_2(z_1,z_2)\bigr)
 =\pi_2\bigl(b(z_1),b(z_2)\bigr).
\]
Set
$
\widetilde f:=A\circ f\circ A^{-1}.
$
Then $\widetilde f$ is induced by
$
\widetilde h:=b\circ h\circ b^{-1}.
$
The automorphism $\widetilde h$ is a nontrivial involution
fixing the origin. Hence it is a rotation
$\widetilde h(z)=\omega z$ with $\omega^2=1$ and $\omega\neq1$.
Therefore
$
\widetilde h(z)=-z.
$
It follows that
$
\widetilde f(s,p)=(-s,p).
$
Consequently,
\[
{\rm Fix}(\widetilde f)
 =\{(s,p)\in\GG:s=0\}
 =\{(0,p):p\in\D\}.
\tag{3.5}\label{eq:fix-tilde}
\]
To see the last equality, note that if $(0,p)=\pi_2(z_1,z_2)\in\GG$,
then $p=z_1z_2\in\D$; conversely, for every $p\in\D$
one may choose $z\in\D$ with $z^2=-p$, and then $(0,p)=\pi_2(z,-z)\in\GG$.
Define
\[
r:\GG\longrightarrow\GG,
\qquad r(s,p)=(0,p).
\]
For $(s,p)\in\GG$ we have $p\in\D$, and hence $(0,p)\in\GG$ by~\eqref{eq:fix-tilde}. Thus $r$ is well defined and holomorphic. Moreover,
$
r\circ r=r$ and 
$r(\GG)=\rm Fix(\widetilde f)$.
Therefore $r$ is a holomorphic retraction
onto $\rm Fix(\widetilde f)$.
Since $\widetilde f=A\circ f\circ A^{-1}$,
$\rm Fix(\widetilde f)=A\bigl(\rm Fix(f)\bigr)$,
and hence $\rm Fix(f)=A^{-1}\bigl(\rm Fix(\widetilde f)\bigr).$
Consequently,
\[
R:=A^{-1}\circ r\circ A
\]
is a holomorphic retraction of $\GG$ onto ${\rm Fix}(f)$.
\smallskip

Thus, in every case, ${\rm Fix}(f)$ is
a holomorphic retract of $\GG$. Since the
image of the connected domain $\GG$ under a retraction is connected, ${\rm Fix}(f)$ is connected as well.
\end{proof}
\smallskip

\subsection{Proof of Theorem~\ref{T:fix of tetrablock}}\label{SS:fixtetra}
We now present the proof of Theorem~\ref{T:fix of tetrablock}.
\begin{proof}[Proof of Theorem~\ref{T:fix of tetrablock}]
    Let $f\in{\rm Hol}(\mathbb{E},\mathbb{E})$ be such that $\text{Fix}(f) \neq \emptyset$. Then $(f^n)$ is not compactly divergent.
    Let $M$ be the 
    limit manifold of $f$. Proceeding similarly as in Theorem~\ref{P:retractofG2}
we conclude that if $\text{dim}(M)=0$ or $1$
then $\text{Fix}(f)$ is either a singleton or equal to $M$;
consequently, $\text{Fix}(f)$ is a holomorphic retract.
Now assume that $\text{dim}(M)=2$. By a result of Ghosh--Zwonek 
\cite[Theorem-4.1]{GargiZwo2025} we conclude that
either $M \cong\mathbb{D}^{2}$ or $M \cong \mathbb{G}_{2}$.
\smallskip

First we consider the case $M \cong\mathbb{D}^{2}$.
Let $\varphi:M \to \D^{2}$ be a biholomorphism,
then $\varphi({\rm Fix (f)})=\text{Fix}(\varphi\circ f\circ\varphi^{-1})$.
Notice that, $\varphi\circ f \circ\varphi^{-1}\in \text{Aut}(\mathbb{D}^{2})$. Hence, $\text{Fix}(\varphi\circ f\circ \varphi^{-1})$
is a holomorphic retraction of $\D^{2}$.
If $r: \D^{2} \to \D^{2}$ is a retraction map for
$\text{Fix}(\varphi \circ f\circ \varphi^{-1})$,
then $\varphi^{-1} \circ r \circ \varphi: M \to M$
is a retraction and ${\rm Fix}(f)$ is the corresponding retract. 
Since, $M $ is retract of $\mathbb{E}$ and ${\rm Fix}(f)$
is retract of $M$, ${\rm Fix}(f)$ is retract of $\mathbb{E}$.
\smallskip

Now suppose $M\cong\mathbb{G}_{2}$. 
In this case, 
$h(\text{Fix}(f))=\text{Fix}(h\circ f\circ h^{-1})$, 
where $h: M \to \mathbb{G}_{2}$ is a biholomorphism. Clearly,  $\text{Fix}(h\circ f\circ h^{-1}) \neq \emptyset.$ 
We now  infer from Theorem~\ref{P:retractofG2} that ${\rm Fix}(h\circ f\circ h^{-1})$ is holomorphic retract of $\GG$. If $r:\GG \to \GG$ is retraction of ${\rm Fix}(h\circ f\circ h^{-1})$ then $h^{-1}\circ r \circ h:M \to M$ is a holomorphic retraction of ${\rm Fix}(f)$ in $M$. Since $M$ is a holomorphic retract of $\mathbb{E}$, ${\rm Fix}(f)$ is a holomorphic retract of $M$, hence,   ${\rm Fix}(f)$ is a holomorphic retract of $\mathbb{E}.$

\smallskip

We now consider
the case when $\text{dim}(M)=3$. In this case, 
$f \in \text{Aut}(\mathbb{E})$. We now claim the following.
\smallskip

\noindent{\bf Claim.} ${\rm Fix}(f) \cap \mathcal{T} \neq \emptyset$, 
where $\mathcal{T}:=\{(x_1, x_2, x_3)\in \mathbb{E}\,:\,x_3=x_1 x_2\}$
is the triangular set. 
\smallskip

\noindent It is a fact that 
that $f(\mathcal{T}) \st \mathcal{T}$.
We also have that $\mathcal{T} \cong \mathbb{D}^{2}$
and $\mathbb{D}^{2}$ is convex. 
Since $\mathbb{D}^{2}$ has the weak Wolff--Denjoy property,
the map $f|_{\mathcal{T}}$ either has a fixed point or 
it is compactly divergent. Therefore, if the map $f|_{\mathcal{T}}$ has
no fixed point in $\mathcal{T}$, then there exists $z_{0} \in \mathcal{T}$
such that $f^{n}(z_{0}) \to \partial \mathcal{T} \st \partial \mathbb{E}$
as $n \to \infty$. Hence, from \cite[Theorem~1.1]{Abate1991}
it follows that $\{f^{n}\}$ is compactly divergent.
Now Theorem~\ref{T:WWD} implies that
$ \text{Fix}(f) = \emptyset $. 
Consequently, we get a contradiction. This established the claim.  
\hfill $\blacktriangleleft$
\smallskip

\noindent
It is  known that $\text{Aut}(\mathbb{E})$ acts
transitively on $\mathcal{T}$ \cite[Remark 6.6]{AWY2007}.
Therefore, there is $\varphi \in \text{Aut}(\mathbb{E})$ such that
the origin is a fixed point of the map  $\varphi\circ f\circ \varphi^{-1}$.
Now it follows from \eqref{E:auttetra} that the
map $\widetilde{f}:=\varphi\circ f\circ \varphi^{-1}$ is either
of the form $L_{\nu}\circ R_{\chi}$ or of the
form $L_{\nu}\circ R_{\chi}\circ F$, 
where $\nu, \chi \in \text{Aut}(\mathbb{D})$ and 
$L_{\nu}, R_{\chi}$, $F$ as \eqref{E:auttetra}. 
\smallskip

First suppose $\widetilde{f}:=L_{\nu}\circ R_{\chi}(z_{1},z_{2},z_{3})$
for some  $\nu, \chi \in {\rm Aut}(\D)$. 
Assume that  $$\nu=\omega \frac{z-\alpha}{\overline{\alpha}z-1}\, ~ {\rm and}~\, \chi=\sigma \frac{z-\beta}{\overline{\beta}z-1}.$$ Since the origin is the fixed point of the map $\tilde{f}$, we have the following relation:
\begin{align}\label{E:E1}
    \begin{bmatrix}
      \omega &   -\omega \alpha \\
      \overline{\alpha}&-1 
\end{bmatrix}\begin{bmatrix} 0 &   0 \\
     0&-1 
\end{bmatrix}\begin{bmatrix}
\sigma &   -\sigma \beta \\
      \overline{\beta}&-1 
    \end{bmatrix}=\begin{bmatrix}
    0 &  0 \\
    0&-1 
    \end{bmatrix}
\end{align}

\noindent
From \eqref{E:E1}, obtain $\alpha =\beta =0$, and
$\widetilde{f}(z_1, z_2, z_3)=(-\omega z_1, -\sigma z_2, \sigma\omega z_3)$. 
If $(z_{1}, z_{2}, z_{3}) \in \text{Fix}(\tilde{f})$ then we have that $\sigma\omega z_{3}=z_{3}$, $\sigma z_{2}=-z_{2}$ and $\omega z_{1}=-z_{1}$. Consider the following subcases:

\smallskip

\noindent
\textbf{Case~1.} $z_{3} \neq 0$. 
\smallskip

\noindent Clearly $\sigma \omega = 1$. Consequently, the map $\tilde{f}(z_{1},z_{2},z_{3})=(-\omega z_{1},-\overline{\omega}z_{2}, z_{3})$. Now if there exists $(z_{1},z_{2},z_{3}) \in \text{Fix}(f)$ with $z_{1} \neq 0$ or $z_{2}\neq 0$ then we have $\omega=-1$. Then we have that $\tilde{f}= {\rm I_{\mathbb{E}}}$. Consequently, $\text{Fix}(f)=\mathbb{E}$, a trivial retract. If this is not the case, then we have that $\text{Fix}(f)=\{(z_{1},z_{2},z_{3}) \in \mathbb{E}:z_{1}=z_{2}=0\}$. Then it follows that $\text{Fix}(f)$ is a one-dimensional retract, and the retraction map is $P_{3}:\mathbb{E} \to \mathbb{E}$ defined by $P_{3}(z_{1},z_{2},z_{3})=(0,0,z_{3})$.
\smallskip

\noindent\textbf{Case~2.} $z_{3}=0$.

\noindent In this case if $(z_{1},z_{2},z_{3}) \in \text{Fix}(f) $ such that $z_{1}, z_{2} \neq 0$,
then $\tilde{f}(z) = {\rm I_{\mathbb{E}}}.$ Consequently, $\text{Fix}(f)=\mathbb{E}$, a trivial retract. On the other hand if either $z_{1}$ or $z_{2}$ is equal to $0$ then it follows that $\text{Fix}(\tilde{f}) \subset \mathcal{T}$.
In this situation, we can consider the map $\tilde{f}|_{\mathcal{T}}: \mathcal{T} \to \mathcal{T}$. Since, $\mathcal{T} \cong \mathbb{D}^{2}$, hence, $\text{Fix}(\tilde{f})$ is a holomorphic retract. 
\smallskip

We now consider  the  other case, when $\tilde{f}(z)=L_{\nu}\circ R_{\chi}\circ F(z_{1},z_{2},z_{3})=L_{\nu}\circ R_{\chi}(z_{2}, z_{1}, z_{3})$.
Proceeding exactly as in the previous argument, we deduce that the condition $\tilde{f}(0)=0$ implies that  $\tilde{f}(z_{1},z_{2},z_{3})=(-\omega z_{2}, -\sigma z_{1}, \sigma \omega z_{3})$. If $(z_{1},z_{2},z_{3}) \in {\rm Fix}(\mathbb{E})$, then we have the following relation:
\begin{align}\label{E:eqfix}
    -\om z_{2}=z_{1},\quad~ -\sigma z_{1}=z_{2},\quad ~~\sigma \om z_{3}=z_{3}.
\end{align}
If $\sigma\omega\neq 1$, then it follows from the above relation that $z_{3}=z_{2}=z_{1}=0$.   Consequently, it follows that ${\rm Fix}(f)$ is a singleton set, hence, a holomorphic retract. 

\smallskip
\noindent
If $\sigma\om=1$, then it follows from the relation \eqref{E:eqfix} that 
$${\rm Fix}(f)=\{(z_{1},z_{2},z_{3}) \in \mathbb{E}: z_{1}=-\om z_{2}\}.$$

\noindent
We  now consider $\Phi \in {\rm Aut}(\mathbb{E})$ defined by $\Phi\big(x_{1},x_{2},x_{3}\big)=\big(\eta x_{1}, \eta^{-1}x_{2},x_{3}\big)$ where $\eta^{2}=-\overline{\om}$. Then $\Phi({\rm Fix}(f))=\{(z_{1},z_{2},z_{3}) \in \mathbb{E}: z_{1}=z_{2}\}.$  Note that, the map  $R:\mathbb{E}\to \mathbb{E}$ defined by
$$R(x_1,x_2,x_3)
=
\left(
\frac{x_1+x_2}{2},
\frac{x_1+x_2}{2},
x_3
\right) $$ is a retraction map of $\Phi({\rm Fix}(f))$. Therefore, we obtain that $\Phi({\rm Fix}(f))$ is a holomorphic retract and, consequently, ${\rm Fix}(f)$ is also a holomorphic retract. Therefore,   we conclude the theorem.

\end{proof}
\smallskip

\subsection{Proof of Theorem~\ref{T:limit in G2}}\label{SS:limitsetsymbidisk}
We now present the proof of Theorem~\ref{T:limit in G2}. 

\begin{proof}[Proof of Theorem~\ref{T:limit in G2}]
Let $\xi \in T(f)$. Hence, by definition,
 there exists $(s,p)\in \GG$ and
a sequence $(n_{k})_{k\in \mathbb{N}}$ such that
$f^{n_{k}}(s,p)\to\xi\in\partial\GG$ as $k\to\infty$.
Note that if we let $z_{0}=(s_{0},p_{0})$, then  
$f^{n_{k}}(s_{0},p_{0})$ is a bounded sequence,
hence, there exists a subsequence $n_{k_{j}}$ such that $f^{n_{k_{j}}}(s_{0},p_{0})\to\zeta\in\partial \GG$. 
By our  assumption   $\zeta=(2e^{i\theta_{0}}, e^{2i\theta_{0}})$
for some $\theta_{0}\in [0, 2\pi)$.
We now consider the function $h:\GG\to\overline{\D}$ defined by
\[
h(s,p)=\dfrac{se^{-i\theta_{0}}}{2}.
\]
Note that  the map $h$ is defined on $\CC$, 
$h(\zeta)=1$ and $|h(s,p)|<1$ for all $(s,p)\in\GG$.
We now consider a holomorphic peak function of $\D$,
defined as follows: 
$\rho_{\om}:\overline{\D}\to\overline{\D}$ 
where $\om\in\mathbb{T}$ and $\rho_{\om} \in C(\overline{\D})\cap \rm Hol(\D)$ with $\rho_{\om}(\overline{\D}\setminus\{\om\})\subset\D$, $\rho_\omega(\om)=1$.
Let us consider the sequence of function $g^{n_{k_{j}}}:\GG\to \overline{\D}$ 
defined by $g^{n_{k_{j}}}(s,p)=\rho_{1}(h(f^{n_{k_{j}}}(s,p)))$. By Montel's theorem, there exists $g\in\rm Hol(\GG,\overline{\D})$ 
such that $g^{n_{k_{j}}}(s,p)\to g(s,p)$ as $j\to\infty$ (upto a subsequence) 
locally uniformly on $\GG$. Note that,
$g(s_{0},p_{0})=\rho_{1}(h(\zeta))=1$. Then by the maximum modulus principle,
we deduce that $g(s,p)= 1$ for all $(s,p) \in \GG$.
From this observation, we deduce that 
if $\xi=(a_{1}+a_{2},a_{1}a_{2})$, then $h(a_{1}+a_{2}, a_{1}a_{2})=1$,
so $(a_{1}+a_{2})= 2e^{i\theta_{0}}$. Clearly, $|a_{1}+a_{2}|=2$.
Hence, it follows that $a_{1}=a_{2}=e^{i\theta}$ with 
$\theta=\theta_{0}(\text{mod} 2\pi)$.
Hence, $\xi=(2e^{i\theta_{0}},e^{2i\theta_{0}})=\zeta$.
This completes the proof of $(i)$.

\smallskip
\noindent

To prove $(ii)$, let $(s,p)\in\GG$  and $\eta\in T(f,(s,p))$.
Clearly, there exists a sequence $(n_{k})_{k\in\mathbb{N}}$ such that $f^{n_{k}}(s,p)\to\eta\in\partial \GG$.
Note that, $f^{n_{k}}(s_{0},p_{0})$ is a bounded sequence.
Hence, there is a subsequence $n_{k_{l}}$ such that $f^{n_{k_{l}}}(s_{0},p_{0}) \to\xi\in\partial\GG$.
By our assumption,
$\xi=(a+e^{i\theta_{0}}, ae^{i\theta_{0}})$ for some $a$ with $|a|<1$.
Consider the function $\Phi_{\om}:\GG\to\cplx$
defined by $$\Phi_{\omega}(s,p):=\frac{2\overline{\omega}p-s}{2-\overline{\omega}s},~~\omega\in\mathbb{T}.$$ It follows from Result~\ref{R:AglerYoung1} that $\Phi_{\om}(\GG) \subset \D$.
Note that  $\Phi_{e^{i\theta_{0}}}$ has continuous extension
through the boundary point $\xi$ and
 $|\Phi_{e^{i\theta_{0}}}(\xi)|=|-e^{i\theta_{0}}|=1$.
 Consider the  sequence of functions $h_{l}:\GG\to\D$
 defined by 
 \[
 h_{l}(s,p):=\rho_{-e^{i\theta_{0}}}\circ \Phi_{e^{i\theta_{0}}}\circ f^{n_{k_{l}}}(s,p)
 \]
 where the function
 $\rho_{-e^{i\theta_{0}}}$ is defined as in the proof of $(i)$.
 By Montel's theorem we deduce that there exists a subsequence $l_{m}$
 such that $h_{l_{m}}(s,p) \to h(s,p)$ for some holomorphic map
 $h:\GG\to \overline{\D}$. Note that  
\begin{align*}
h(s_{0},p_{0})&=\lim_{m \to \infty}h_{l_{m}}(s_{0},p_{0})\\
&=\lim_{m \to \infty}\rho_{-e^{i\theta_{0}}}\circ \Phi_{e^{i\theta_{0}}}\circ f^{n_{k_{l_m}}}(s_0,p_0)=\rho_{-e^{i\theta_{0}}}(\Phi_{e^{i\theta}}(\xi))=1.
\end{align*} 
Therefore, by the maximum modulus principle,
 $h(z)= 1$ for all $z \in \GG$. Let $\eta=(a_{1}+a_{2}, a_{1}a_{2})$
 with $(a_{1},a_{2}) \in \overline{\D} \times \overline{\mathbb{D}}$.
 If $|a_{1}+a_{2}|=2$
 then we have $\eta=(2\omega,\omega^{2})$
 for some $\omega\in\mathbb{T}$, then from $(i)$
 we conclude that $T(f,z_{0}) \subset \partial_{s}\GG$.
 This contradicts the hypothesis. Hence, $|a_{1}+a_{2}|<2$.
 Note that the map $\Phi_{e^{i\theta}}$ has continuous extension
 through the boundary point $\eta$. Let $\eta=(e^{i\theta_{1}}+e^{i\theta_{2}}, e^{i\theta_{1}}e^{i\theta_{2}})$
 with $\theta_{1}\neq \theta_{2} (\text{mod}~2\pi)$.
 Since the map $h(s,p)=1$,
 $\rho_{-e^{i\theta_0}}(\Phi_{e^{i\theta_0}}(e^{i\theta_{1}}+e^{i\theta_{2}}, e^{i\theta_{1}}e^{i\theta_{2}}))=1$.
Clearly it follows that 
$\Phi_{e^{i\theta_0}}(e^{i\theta_{1}}+e^{i\theta_{2}},
e^{i\theta_{1}}e^{i\theta_{2}})=-e^{i\theta_0}$.
A simple computation gives us 
 \begin{align*}
(e^{i\theta_0}-e^{i\theta_{1}})(e^{i\theta_0}-e^{i\theta_{2}})&=0
\end{align*}
Therefore, either $\theta_{1}=\theta_0 (\text{mod}2\pi)$
or $\theta_{2}=\theta_0 (\text{mod}~2\pi)$. 
Hence, $T(f) \subset \pi (\overline{\D}\times e^{i\theta}) \cup \pi ( e^{i\theta} \times \overline{\D})$.
\end{proof}
We conclude this article with the following remark.
\begin{remark}
Theorem~\ref{T:Tichotomy} does not extend to $\mathbb{C}^n$ for $n \geq 4$ via our 
method, since Result~\ref{R:Abate} is not applicable in higher dimensions.
In \cite{Gargi2}, Ghosh and Zwonek introduced the class $\mathbb{L}_n$,
defined as the image of a two-proper holomorphic map from a Lie ball $L_n$. In particular, $\mathbb{G}_2 \cong \mathbb{L}_2$ and $\mathbb{E} \cong \mathbb{L}_3$, so $\mathbb{L}_n$ has the weak Wolff--Denjoy property for $n=2,3$. Moreover, by \cite[Theorem~5.3]{Gargi2}, every $f \in \mathrm{Aut}(\mathbb{L}_n)$ has this property. This leads to the question whether $\mathbb{L}_n$ enjoys the weak Wolff--Denjoy property for all $n \geq 4$.
\smallskip

For $\mathbb{L}_4$, let $f \in \mathrm{Hol}(\mathbb{L}_4,\mathbb{L}_4)$ with ${\rm Fix}(f)=\emptyset$, and suppose $(f^n)$ is not compactly divergent. Arguing as in Theorem~\ref{T:WWD}, consider ${\rm dim}(M) \in \{0,1,2,3,4\}$, 
where $M$ is the limit manifold of $f$. If ${\rm dim}(M)=4$, then $f \in \mathrm{Aut}(\mathbb{L}_4)$, which implies $\{f^n\}$ is compactly divergent—a contradiction. The cases ${\rm dim}(M)\in\{0,1,2\}$ follow as before, leaving only ${\rm dim}(M)=3$, with $M$ passing through the origin.
A complete classification of three-dimensional holomorphic retracts of $\mathbb{L}_4$ through the origin is currently unknown. If, up to biholomorphism, the only such retracts are $\{0\}\times L_3$ and $\mathbb{L}_3 \times \{0\}$, then Theorem~\ref{T:Tichotomy} would imply that $f$ is a periodic automorphism of one of these domains. Since every such automorphism has a fixed point, it would follow that $\mathbb{L}_4$ has the weak Wolff--Denjoy property.
\end{remark}
\noindent {\bf Acknowledgements.} 
We are thankful to Professor Zwonek for pointing out a gap in the statement and
in the proof of Theorem~\ref{P:retractofG2} in an earlier version of this article.  
S. Chatterjee is supported by the Institute Postdoctoral Fellowship of the Indian Institute of Technology, Kanpur. C. Sur is supported by a CSIR fellowship (File
No-09/0092(15100)/2022-EMR-I.)

    
\end{document}